\documentclass[11pt]{amsart}
\usepackage{amsmath,amsthm, amscd, amssymb, amsfonts}

\usepackage[]{color} 

\usepackage{graphicx} 

\usepackage{mathrsfs} 
\usepackage{figsize}
\usepackage{algorithm}
\usepackage{algpseudocode}
\usepackage{enumerate} 

\usepackage{float} 

\usepackage{tikz} 

\usepackage{tkz-berge} 
\usepackage{tkz-graph}
\usetikzlibrary{arrows}
\usetikzlibrary{calc}
\usetikzlibrary{matrix}
\usetikzlibrary{decorations.pathreplacing}  

\usepgflibrary{shapes.geometric} 
\usetikzlibrary[topaths]
\usetikzlibrary{positioning}
\usetikzlibrary{arrows}  
\usetikzlibrary{graphs,graphs.standard}

\usetikzlibrary{arrows.meta,calc}

\usepackage{tabularx}

\theoremstyle{plain}
\newtheorem{theorem}{Theorem}[section]
\newtheorem{corollary}[theorem]{Corollary}
\newtheorem{proposition}[theorem]{Proposition}
\newtheorem{lemma}[theorem]{Lemma}
\newtheorem{definition}{Definition}[section]  
 
\newtheorem{remark}[theorem]{Remark}
\newtheorem{example}[theorem]{Example}

\usepackage[colorlinks]{hyperref}

\def\F{\mathbb{F}}

 \def\ad{\mathrm{ad}}

\title[The comaximal Graph]{The comaximal Graph of a finite-dimensional Lie Algebra}

\thanks{...}

\author{David A. Towers}
\address{Lancaster University, School of Mathematical Sciences, Lancaster, LA1 4YF, UK} 
  \email{d.towers@lancaster.ac.uk}

\author{Yesneri Zuleta}
 \address{Universidad del Norte,  Departamento de Matemáticas y Estadística, Km 5 via a Puerto Colombia, Barranquilla, Colombia.}
  \email{zuletay@uninorte.edu.co}

\author{Ismael Gutierrez}
 \address{Universidad del Norte,  Departamento de Matemáticas y Estadística, Km 5 via a Puerto Colombia, Barranquilla, Colombia.}
  \email{isgutier@uninorte.edu.co }

\subjclass[2010]{17B30, 17B45, 05C40, 05C25}
\keywords{Graphs associated with Lie Algebras; nilpotent Lie Algebra; hypercenter of a Lie Algebra; nilpotentizer; strongly self-centralizing Lie subalgebras}

\begin{document}

\begin{abstract}
In this paper, we introduce the comaximal graph $\Gamma(L)$ of a finite-dimensional Lie algebra $L$, whose vertices are the nontrivial proper Lie subalgebras of $L$ over a field $\mathbb{F}$, and two vertices $A$ and $B$ are adjacent if and only if $\langle A, B\rangle =L$. We establish general structural properties, including a characterization of isolated vertices via the Frattini subalgebra and a criterion for completeness in terms of $\mu$-algebras. We classify $\Gamma(L)$ for all Lie algebras of dimension at most three over a finite field $\mathbb{F}_q$, providing an explicit description in each case. The resulting graphs exhibit a rich range of behaviors, depending on the structure of the derived algebra and the action of $\operatorname{ad}x$. For $L\cong \mathfrak{sl}_2(\mathbb{F}_q)$, we determine several graph invariants, including the degree sequence, clique number, chromatic number, domination number, diameter, and radius, and show that $\Gamma(L)$ is connected and non-planar. The graph contains a large clique formed by the nonsplit semisimple lines together with the Borel subalgebras, while the nilpotent and split semisimple lines have a more restricted adjacency structure governed by their containment in Borel subalgebras. 
\end{abstract}

\maketitle


\section{Introduction}

Associating graphs with algebraic structures has proven to be a fruitful approach for encoding algebraic information into combinatorial form. Well-known examples include commuting graphs, non-commuting graphs, intersection graphs, and power graphs of groups and rings. These constructions often reveal subtle structural features that are not immediately visible from the algebraic definition alone.

In the context of vector spaces, A. Das \cite{Das1} introduced the \emph{subspace sum graph}, whose vertices are the nonzero proper subspaces of a finite-dimensional vector space $V$, and where two subspaces are adjacent if and only if their sum equals $V$. This construction was later studied and extended by several authors \cite{Chelliah, Bilal}.
The adjacency condition reflects a spanning property rather than inclusion, leading to a highly symmetric and well-understood graph.

More recently, the systematic study of graphs associated with finite-dimensional Lie algebras has been initiated in \cite{TGF} and \cite{TGF2}. In \cite{TGF}, the nilpotent graph of a Lie algebra $L$ is introduced, whose vertices are the non-nilpotent elements of $L$, with two vertices adjacent if and only if they generate a nilpotent subalgebra. In \cite{TGF2}, an analogous construction is carried out for solvability, defining the solvable graph of L. Both works establish structural properties of these graphs, including connectedness, diameter, and clique number, and provide classifications over finite fields. The present paper continues this line of investigation by introducing a graph that reflects the subalgebra structure of $L$ rather than the properties of individual elements.

Let $L$ be a finite-dimensional Lie algebra over a field $\mathbb{F}$. The \emph{comaximal graph} of $L$, denoted by $\Gamma(L)$, is the simple graph whose vertex set consists of all nontrivial proper Lie subalgebras of $L$, and where two distinct vertices $A$ and $B$ are adjacent if and only if $\langle A, B\rangle = L$.

Unlike the subspace sum graph, the vertex set of $\Gamma(L)$ is restricted to Lie subalgebras rather than arbitrary subspaces. Consequently, the structure of $\Gamma(L)$ depends not only on the dimension of $L$ but also on its internal Lie structure. Even in low dimensions, the behavior of $\Gamma(L)$ varies dramatically among abelian, nilpotent, solvable, and simple Lie algebras.

Throughout this paper, we focus primarily on Lie algebras over finite fields $\mathbb{F}_q$. This setting is natural for a combinatorial study: over finite fields, the number of subalgebras is finite and explicitly computable, the subalgebra lattice has a concrete projective-geometric interpretation, and the resulting graphs can be described with precise cardinalities and invariants.

Our aim is to initiate a systematic study of the comaximal graph of finite-dimensional Lie algebras, with a primary focus on Lie algebras over finite fields. We begin by establishing general structural properties of $\Gamma(L)$,
including a characterization of isolated vertices in terms of the Frattini subalgebra and a criterion for completeness.

The main part of the paper is devoted to a complete classification of $\Gamma(L)$ for Lie algebras of dimension at most three over a finite field $\mathbb{F}_q$. When $\dim L \le 2$, the graph is easily determined. The three-dimensional case is substantially richer and splits naturally according to the dimension of the derived algebra $L'$. 

In the simple case $L=\mathfrak{sl}_2(\mathbb{F}_q)$, we compute the degree sequence of $\Gamma(L)$ explicitly and determine several graph invariants, including the clique number, chromatic number, domination number, diameter, and radius. We prove that $\Gamma(L)$ is connected and non-planar. The graph contains a large clique formed by the nonsplit semisimple lines together with the Borel subalgebras; the Borel subalgebras alone form a complete subgraph $K_{q+1}$, while the nilpotent and split semisimple lines have a more restricted adjacency structure governed by their containment in Borel subalgebras.

\section{Preliminaries}
\subsection*{Basics on Graphs.}
A graph $\Gamma = (V, E)$ consists of a set of vertices $V$ and a set of edges $E\subseteq  \{\{u, v\}\mid u,v \in V, u\neq v\}$. All graphs in this paper are simple and undirected. We recall the following standard terminology; for a comprehensive reference, see \cite{Diestel, West, Bondy}. The \emph{order} and \emph{size} of $\Gamma$ are $|V|$ and $|E|$, respectively. The degree $\deg(v)$ of a vertex $v$ is the number of edges incident to $v$. A graph is regular if all vertices have the same degree. The degree sequence of $\Gamma$ is the list of degrees of all vertices in non-increasing order. A graph is \emph{complete} if every pair of distinct vertices is adjacent; the complete graph on $n$ vertices is denoted by $K_n$.

A path of length $k$ in $\Gamma$ is a sequence of distinct vertices $v_0, v_1,\ldots, v_k$ such that $\{v_i, v_{i+1}\} \in E$ for all $i$. The distance $d(u, v)$ between two vertices is the length of a shortest path connecting them, or $\infty$ if no such path exists. The eccentricity $\operatorname{ecc}(v)$ of a vertex $v$ is $\max\{d(v, u)\mid u\in V\}$. The diameter $\operatorname{diam}(\Gamma)$ is the maximum eccentricity, the radius $r(\Gamma)$ is the minimum eccentricity, and the center of $\Gamma$ is the set of vertices achieving the minimum eccentricity.

A graph $\Gamma$ is connected if there is a path between every pair of vertices. The girth of $\Gamma$ is the length of a shortest cycle, or $\infty$ if $\Gamma$ has no cycles. A clique is a set of pairwise adjacent vertices. The clique number $\omega(\Gamma)$ is the size of a largest clique. An independent set is a set of pairwise non-adjacent vertices. A proper coloring of $\Gamma$ is an assignment of colors to vertices such that no two adjacent vertices share a color. The chromatic number $\chi(\Gamma)$ is the minimum number of colors needed. A graph is called perfect if
$\chi(H) = \omega(H)$ for every induced subgraph $H$ of $\Gamma$.

A dominating set is a set $S\subseteq V$ such that every vertex not in $S$ is adjacent to some vertex in $S$. The domination number $\gamma(\Gamma)$ is the size of a smallest dominating set. A graph is planar if it can be drawn in the plane without edge crossings. By Kuratowski's theorem, $\Gamma$ is planar if and only if it contains no subdivision of $K_5$ or $K_{3,3}$ as a subgraph. A claw is the complete bipartite graph $K_{1,3}$. A graph is claw-free if it contains no induced subgraph isomorphic to a claw.

Throughout the paper, we use $\Gamma(L)^*$ to denote the subgraph obtained from $\Gamma(L)$ by removing the isolated vertices.

\subsection*{Basics on Lie Algebras.}
Let $L$ be a finite-dimensional Lie algebra over a field $\mathbb{F}$. A \emph{Lie algebra} is a vector space $L$ over $\mathbb{F}$, equipped with a bilinear operation $[\cdot,\cdot]: L \times L \to L$, called the \emph{Lie bracket}, satisfying the following axioms:
\begin{enumerate}
    \item \textbf{Alternating property:} $[x,x] = 0$ \ $\forall  x \in L$;
    \item \textbf{Jacobi identity:} $[x,[y,z]] + [y,[z,x]] + [z,[x,y]] = 0$ $\forall x, y, z \in L$.
\end{enumerate}
These imply \textbf{anticommutativity}: $[x,y] = -[y,x]$ for all $x,y \in L$.

A Lie algebra is said to be \emph{abelian} if $[x,y] = 0$ for all $x,y \in L$. Every one-dimensional Lie algebra is abelian. The \emph{dimension} of a Lie algebra is its dimension as a vector space over $\mathbb{F}$. A \emph{subalgebra} of $L$ is a subspace closed under the Lie bracket. If $A, B$ are subspaces of $L$, then $[A,B]$ denotes the subspace spanned by all $[a,b]$ with $a \in A$, $b \in B$. The \emph{centraliser} of an element $x\in L$ is $C_L(x)=\{y\in L \mid [x,y]=0\}$.

The Frattini subalgebra $F(L)$ of a Lie algebra $L$ is the intersection of all maximal subalgebras of $L$. It is the largest subalgebra of $L$ with the property that every subalgebra generated by $F(L)$ together with any other proper subalgebra is still proper. If $L$ has no maximal subalgebras, we set $F(L) = L$. 

A subalgebra $B$ of $L$ is called a Borel subalgebra if it is a maximal solvable subalgebra of $L$. In the case $\mathfrak{sl}_2(\mathbb{F}_q)$, all Borel subalgebras are conjugate and two-dimensional. An element $x\in L$ is called nilpotent if $\ad\, x$ is a nilpotent linear map, and semisimple if $\ad\, x$ is a semisimple linear map. In $\mathfrak{sl}_2(\mathbb{F}_q)$, a semisimple element is called split if its characteristic polynomial splits over $\F_q$, and nonsplit otherwise.

 Let $\mathbb{F}$ be a field. Standard examples include: 
\begin{itemize}
    \item $\mathfrak{gl}_n(\mathbb{F})$, the Lie algebra of all $n \times n$ matrices over $\mathbb{F}$ with bracket $[x,y] = xy - yx$;
   \item $\mathfrak{sl}_n(\mathbb{F})\subseteq \mathfrak{gl}_n(\mathbb{F})$, the subalgebra of traceless $n \times n$ matrices over $\mathbb{F}$, which is simple for $n\geq 2$. In particular, $\mathfrak{sl}_2(\mathbb{F}_q)$, the three-dimensional simple Lie algebra over a finite field, which serves as the central example of this paper, with standard basis $\{x, y, h\}$, where
   \[x =   \begin{pmatrix} 
0 & 1 \\ 0 & 0 
\end{pmatrix},
  \quad
  y =   \begin{pmatrix} 
  0 & 0 \\ 1 & 0 
  \end{pmatrix},
  \quad
  h =  \begin{pmatrix} 
  1 & 0 \\ 0 & -1 
  \end{pmatrix},\]
and Lie brackets $[h,x]=2x,\qquad [h,y]=-2y,\qquad [x,y]=h$. 
\end{itemize}

The \emph{derived series} of $L$ is defined recursively by $L^{(0)} = L$, and $L^{(k+1)} = [L^{(k)}, L^{(k)}]$.
Further, the \emph{lower central series} is given by $L^{1}=L$, and $L^{i+1}=[L,L^{i}]$. The derived algebra of $L$, usually denoted by $L'$ is $L^2 = [L, L]$. It is spanned by all products of the form $[x, y]$ with $x, y\in L$; $L$ is called perfect if $L^2 = L$. The Lie algebra $L$ is called \emph{solvable} if $L^{(r)} = 0$ for some $r$, and \emph{nilpotent} if $L^k = 0$ for some $k$. Every nilpotent Lie algebra is solvable, but the converse is not true.

The smallest nonabelian nilpotent Lie algebra is the Heisenberg algebra $H_{3}(\F_q)$, appearing in Section 4.3.  In that case, $L'$ is one-dimensional and central, and every $\ad\, x$ is nilpotent of index at most $2$.

Nilpotent Lie algebras have many nilpotent elements, but not conversely: simple Lie algebras such as $\mathfrak{sl}_2(\F_q)$
contain nilpotent elements (e.g.\ $x$ and $y$) yet are far from nilpotent
themselves.  The comaximal graph $\Gamma(L)$ reflects this distinction:
nilpotent Lie algebras tend to have larger families of subalgebras and
simpler adjacency patterns, while semisimple Lie algebras exhibit a more
rigid and intricate subalgebra geometry.

Following V. R. Varea \cite{varea}, a Lie algebra $L$ is called a $\mu$-algebra if every proper subalgebra of $L$ is one-dimensional. This notion plays a key role in characterizing when the comaximal graph $\Gamma(L)$ is complete; see Theorem \ref{p:gen} below.

\section{The comaximal graph of a Lie algebra}

\begin{definition}
Let $L$ be a finite-dimensional Lie algebra over a field $\mathbb{F}$. We define the \emph{comaximal graph} of $L$ as the undirected, simple graph $\Gamma(L) = (\mathcal{V, E})$ as follows: $\mathcal{V}$ is the set of all non-trivial proper subalgebras of $L$, and for $A, B\in \mathcal{V}$, $\{A, B\}\in \mathcal{E}$ if and only if $\langle A, B\rangle=L$. 
\end{definition}

\begin{remark}
The definition is inspired by the subspace-sum graph introduced by A. Das \cite{Das1} and subsequently extended by R. Venkatasalam and S. Chelliah \cite{Chelliah}, where vertices are all nonzero proper subspaces of a vector space $V$, and adjacency is defined by spanning the whole space. In our setting, the vertices are Lie subalgebras rather than arbitrary subspaces, and the resulting graph reflects the internal algebraic structure of $L$ much more
subtly.
\end{remark}

\begin{lemma}\label{isolated points}
Let $L$ be a finite-dimensional Lie algebra over a field $\mathbb{F}$. Then a vertex of $\Gamma(L)$ is isolated if and only if it is contained in the Frattini subalgebra of $L$. In particular, if $F(L) = 0$, then the graph $\Gamma(L)$ has no isolated vertices.
\end{lemma}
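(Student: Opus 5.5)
The plan is to prove the two implications separately. Both rest on one standard fact: in a finite-dimensional Lie algebra every proper subalgebra is contained in a maximal subalgebra, since a strictly increasing chain of subalgebras has bounded dimension. Throughout, $A$ is a vertex of $\Gamma(L)$, so $A$ is a nontrivial proper subalgebra of $L$.

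First suppose $A\subseteq F(L)$, and let $B$ be any other vertex. Since $B$ is proper, it lies in some maximal subalgebra $M$. Since $A\subseteq F(L)\subseteq M$, we get $\langle A,B\rangle\subseteq M\neq L$. Hence $A$ is adjacent to no vertex, i.e.\ it is isolated. The same argument is just the characterization of $F(L)$ recalled in the preliminaries, namely that adjoining $F(L)$ to a proper subalgebra keeps it proper.

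Conversely, suppose $A\not\subseteq F(L)$. Then some maximal subalgebra $M$ does not contain $A$. Note that $L$ has maximal subalgebras, since $L\neq 0$ because $A$ is a nontrivial proper subalgebra; so we are not in the degenerate convention $F(L)=L$ with no maximal subalgebras. Now $\langle A,M\rangle$ is a subalgebra that properly contains $M$, and maximality forces $\langle A,M\rangle=L$. The point needing care is that $M$ must be a vertex distinct from $A$:
\begin{itemize}
\item $M$ is proper by definition.
\item $M$ is nonzero unless $0$ is maximal. In that case $L$ has no nonzero proper subalgebras, so $L$ is one-dimensional and there would be no vertex $A$ at all.
\item $M\neq A$, because $A\not\subseteq M$.
\end{itemize}
Hence $A$ is adjacent to $M$ and is not isolated.

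The final assertion follows immediately. If $F(L)=0$, no nontrivial subalgebra lies in $F(L)$, so there are no isolated vertices. The only real obstacle is the bookkeeping in the converse, which ensures the witnessing maximal subalgebra is a genuine vertex; I would address it with the dimension argument above.
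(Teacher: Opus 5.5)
Your proof is correct and follows the paper's argument in both directions. If $A\not\subseteq F(L)$, a maximal subalgebra $M\not\supseteq A$ gives $\langle A,M\rangle=L$; conversely, a maximal subalgebra containing any vertex $B$ also contains $F(L)\supseteq A$. Your extra check that $M$ is a genuine vertex distinct from $A$ (nonzero because $0\subsetneq A\subsetneq L$) is a detail the paper leaves implicit.
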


\begin{proof}
Suppose that $A$ is an isolated vertex of $\Gamma(L)$, and let $M$ be any maximal subalgebra of $L$. If $A \not\subseteq M$, then maximality of $M$ implies $\langle A, M\rangle =L$. This would mean that $A$ and $M$ are adjacent, a contradiction. Hence $A \subseteq M$ for every maximal subalgebra $M$ of $L$, and thus $A \subseteq F(L)$.

Conversely, assume that $A \subseteq F(L)$, and let $B$ be any vertex of $\Gamma(L)$. If we choose a maximal subalgebra $M$ of $L$ containing $B$, we have $\langle A, B\rangle \subseteq M \neq L$. This shows that $A$ is not adjacent to any vertex of $\Gamma(L)$.
\end{proof}

Following the same arguments as in \cite[Theorem 3.1]{Das1}, we obtain the following result. 

\begin{theorem}
$\Gamma(L)$ is complete if and only if every proper subalgebra is one-dimensional.
\end{theorem}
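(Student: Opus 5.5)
We prove the two implications separately, working only with the definition of adjacency. Throughout we use that any subalgebra generated by a set of elements contains the span of those elements, and that a proper subalgebra never equals $L$.

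For the "if" direction, assume every proper subalgebra of $L$ is one-dimensional. Let $A \neq B$ be vertices of $\Gamma(L)$. Then $A$ and $B$ are distinct lines, so $A \cap B = 0$ and $A+B$ is two-dimensional. The subalgebra $\langle A, B\rangle$ contains $A+B$, so it has dimension at least $2$. By hypothesis every proper subalgebra has dimension $1$, so $\langle A,B\rangle$ cannot be proper; hence $\langle A,B\rangle=L$. Thus any two distinct vertices are adjacent and $\Gamma(L)$ is complete. If $\dim L\le 1$ the vertex set is empty, so the statement holds trivially, depending on the convention for the empty graph.

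For the "only if" direction, assume $\Gamma(L)$ is complete and suppose, for contradiction, that some proper subalgebra $A$ has $\dim A\ge 2$. Choose a nonzero $a\in A$ and let $B=\mathbb{F}a$. Then $B$ is a one-dimensional subalgebra, since every one-dimensional subspace is abelian and hence closed under the bracket. Moreover $B\neq A$ because the dimensions differ, and $B$ is nontrivial and proper. So $A$ and $B$ are distinct vertices. However $\langle A,B\rangle=A\neq L$, so $A$ and $B$ are not adjacent, which contradicts completeness. Therefore every proper subalgebra is one-dimensional, so $L$ is a $\mu$-algebra in Varea's sense (when $\dim L\ge 2$).

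The only delicate point is the "if" direction, where we need that two distinct one-dimensional subalgebras cannot generate a proper subalgebra. This follows directly from the hypothesis through the dimension count above, so neither direction presents a real obstacle. The argument is the Lie analogue of \cite[Theorem 3.1]{Das1}, with "subspace sum" replaced by "generated subalgebra".
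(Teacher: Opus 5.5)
Your proof is correct and follows essentially the same route as the paper. The ``if'' direction is the same observation that two distinct lines generate a subalgebra of dimension at least $2$, which must therefore be $L$. In the ``only if'' direction you pair the offending subalgebra $A$ with a line inside it, whereas the paper uses two independent lines $\mathbb{F}x,\mathbb{F}y$ inside $S$; this difference is cosmetic, since in both cases the generated subalgebra stays inside a proper subalgebra.
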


\begin{proof}
Suppose $\Gamma(L)$ is complete. If $S$ is a proper subalgebra of $L$ and $\dim(S)> 1$,  pick any two linearly independent elements $x,y\in S$. Then $\langle \mathbb{F}x,\mathbb{F}y\rangle\subseteq S\neq L$, so $A=\mathbb{F}x$ and $B=\mathbb{F}y$ are not adjacent, contradicting completeness of $\Gamma(L)$.

Conversely, if every proper subalgebra of $L$ is one-dimensional, the subalgebra generated by any two of them is equal to $L$, and so $\Gamma(L)$ is complete.
\end{proof}

\smallskip

Recall that Lie algebras in which every subalgebra is one-dimensional were called $\mu$-algebras. The existence of such algebras of dimension greater than three is an interesting open problem. However, we have the following result.

\begin{theorem}\label{p:gen} 
Let $L$ be a Lie algebra over a perfect field $\mathbb{F}$ of characteristic zero or $p>3$. Then $L$ is a $\mu$-algebra if and only if either
\begin{itemize}
\item[(i)] $\dim L\leq 2$, or
\item[(ii)] $L$ is three-dimensional simple and $\sqrt{\mathbb{F}} \not \subseteq \mathbb{F}$. 
\end{itemize}
\end{theorem}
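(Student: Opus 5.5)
The plan is to prove the two directions separately. The reverse direction is essentially a check, and the forward direction needs a structural reduction to dimension three.

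\emph{Reverse direction.} If $\dim L\le 2$, every proper subalgebra has dimension at most one, so $L$ is a $\mu$-algebra trivially. In case (ii), suppose some proper subalgebra $S$ of the three-dimensional simple algebra $L$ had dimension two. Then $S$ is solvable, so $[S,S]$ has dimension at most one. If $S$ is nonabelian, $S$ contains an element $x$ with $\ad x$ having a nonzero eigenvalue in $\mathbb{F}$. If $S$ is abelian, we use that the centraliser of a nonzero element in a three-dimensional simple algebra is one-dimensional in characteristic $\neq 2$. In either case $x$ acts with a nonzero eigenvalue in $\mathbb{F}$. For a three-dimensional simple algebra, $L\otimes\bar{\mathbb{F}}\cong\mathfrak{sl}_2$, and the characteristic polynomial of $\ad x$ is $t(t^2-\kappa(x))$ for a quadratic form $\kappa$ (a multiple of the Killing form). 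So an eigenvalue in $\mathbb{F}$ means $\kappa(x)$ is a square in $\mathbb{F}$. The hypothesis $\sqrt{\mathbb{F}}\not\subseteq\mathbb{F}$ I would interpret, as in Varea, as saying that $\kappa(x)$ is never a nonzero square. Then no such $x$ exists, and every proper subalgebra is one-dimensional.

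\emph{Forward direction.} Let $L$ be a $\mu$-algebra with $\dim L\ge 3$.
\begin{enumerate}
\item If $L$ is solvable, then $L'$ is a proper ideal. Take $L'$ together with one more element if $L'\neq 0$, or any two elements if $L$ is abelian. This gives a proper subalgebra of dimension at least two, unless $\dim L\le 2$. (For $\dim L\geq 3$, a codimension-one subspace containing $L'$ is a subalgebra of dimension $\ge 2$.) So $L$ is not solvable.
\item Every maximal subalgebra is one-dimensional, hence abelian. In characteristic $0$ or $p>3$, I would invoke the known result (Varea/Towers) that a Lie algebra all of whose maximal subalgebras are abelian, or of dimension one, is either solvable or three-dimensional simple. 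Alternatively, argue directly: the solvable radical $R$ is an ideal. If $R\neq 0$ and $R\neq L$, then $R+\mathbb{F}x$ is a subalgebra of dimension $\ge2$. If $R$ is a proper subalgebra, it has dimension one, which gives a two-dimensional subalgebra; if $R=L$, then $L$ is solvable. So $L$ is semisimple, and indeed simple, since any proper ideal plus a line is a larger subalgebra.
\item A simple $L$ over a perfect field of characteristic $0$ or $p>3$ with no two-dimensional subalgebras must have no nonzero element $x$ with $\ad x$ having a nonzero eigenvalue in $\mathbb{F}$. Otherwise $\mathbb{F}x+\mathbb{F}v$, with $v$ an eigenvector, is a two-dimensional subalgebra. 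Together with the fact that every proper subalgebra is abelian of dimension one, this forces each centraliser $C_L(x)$ to be $\mathbb{F}x$. Hence $L$ is a form of a simple algebra of rank one, that is, three-dimensional.
\end{enumerate}
Finally, apply the reverse-direction analysis: a three-dimensional simple algebra has a two-dimensional subalgebra exactly when $\kappa$ takes a nonzero square value. This yields condition (ii).

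\emph{Main obstacle.} I expect the hardest step to be step 3, forcing $\dim L=3$ for a simple $\mu$-algebra over a non-closed field. Here I would lean on the cited theorem of Varea, which presumably handles the case of fields where the rank-one reduction uses perfectness and the characteristic restriction. Failing that, I would use that a Cartan subalgebra must be one-dimensional, namely $\mathbb{F}x$ with $x$ semisimple (by perfectness, via Jordan decomposition). Then I would compute that the root space decomposition over $\bar{\mathbb{F}}$ has rank one, giving $\dim L=3$.
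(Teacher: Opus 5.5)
\textbf{Verdict.} Your reverse direction and your reduction to simple $L$ are fine. The reverse direction needs your non-split reading of $\sqrt{\mathbb F}\not\subseteq\mathbb F$, which is the reading that makes (ii) true. The reduction to simple $L$ is essentially the paper's first paragraph. There is, however, a genuine gap in Step 3.

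\textbf{Where Step 3 fails.} You correctly obtain $C_L(x)=\mathbb Fx$ and a one-dimensional Cartan subalgebra. The inference ``rank one, hence three-dimensional'' is valid only in characteristic $0$. There $L\otimes\bar{\mathbb F}$ is semisimple of rank one, and the Killing--Cartan classification gives $\mathfrak{sl}_2(\bar{\mathbb F})$. In characteristic $p>3$ the inference is false. The Zassenhaus algebras $W(1;n)$, of dimension $p^n$, and more generally the Albert--Zassenhaus algebras are simple and have one-dimensional Cartan subalgebras. For example, in $W(1;1)$ the line spanned by $x\partial$ is a toral Cartan subalgebra with $p-1$ one-dimensional nonzero root spaces.

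Your fallback via Jordan decomposition and a root space decomposition over $\bar{\mathbb F}$ does not rescue the step, for three reasons:
\begin{itemize}
\item in characteristic $p$ there is no root-system classification of rank-one simple algebras;
\item the semisimple part of $\ad x$ need not be inner;
\item $L\otimes\bar{\mathbb F}$ need not even be simple unless you first show $L$ is central.
\end{itemize}
Deferring to Varea at this point is circular, since that is exactly the content of the theorem. A warning sign is that perfectness plays no essential role in your characteristic-$p$ argument.

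\textbf{What is missing, and how the paper supplies it.} You need an argument that excludes forms of the nonclassical rank-one algebras. The paper proceeds in four steps.
\begin{enumerate}
\item The centroid $\Gamma$ satisfies $\Gamma x\subseteq C_L(x)=\mathbb Fx$, so $\Gamma=\mathbb F$ and $L$ is central simple.
\item If $\dim L>3$, the Benkart--Osborn classification of rank-one simple algebras makes $L$ a form of an Albert--Zassenhaus algebra.
\item A $\mu$-algebra clearly has the one-and-a-half generation property, so by Boisen $L$ is a form of a Zassenhaus algebra.
\item Take a splitting field $\mathbb K$ for $\ad x$. Because $\mathbb F$ is perfect, $\mathbb K/\mathbb F$ is Galois. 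The algebra $L_{\mathbb K}$ has a unique maximal subalgebra $M$ of codimension one, which is therefore stable under the Galois action and descends. So $M\cap L$ is a codimension-one subalgebra of $L$, which is impossible in a $\mu$-algebra of dimension greater than $3$.
\end{enumerate}
Without this argument or a substitute for it, your forward direction is proved only in characteristic zero.
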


\begin{proof} 
Suppose first that $L$ is not simple. If $\dim L\geq 2$ there is a one-dimensional ideal $A$ of $L$ and $x\notin A$. But then $L=\langle A,x\rangle=A+\mathbb{F}x$ and $\dim L=2$. Hence, case (i) holds.
\par

So, assume now that $L$ is simple. Then $L$ has rank one and $\mathbb{F}x$ is a Cartan subalgebra of $L$. Let $\Gamma$ denote the centroid of $L$. Since $\Gamma x$ is an abelian subalgebra of $L$, we have that $\Gamma x < C_L(x) = \mathbb{F}x$. So $\Gamma = \mathbb{F}$, and $L$ is central-simple. Suppose that $\dim L > 3$. It follows from \cite{bo} that $L$ is a form of an Albert-Zassenhaus algebra. Moreover, $L$ has the one-and-a-half generation property. For, given any $y \in L$, either $y=\alpha x$ for some $\alpha \in \mathbb{F}$, in which case $\langle y, z \rangle = L$ for any $z \not \in \mathbb{F}x$, or else $y \not \in \mathbb{F}x$, and then $\langle y,x \rangle = L$. thus, $L$ is a form of a Zassenhaus algebra, by \cite{bois}.
\par

Let $\mathbb{K}$ be a splitting field for the minimal polynomial of ad\,$x$ over $\mathbb{F}$, and let $G$ be the Galois group of $\mathbb{K}$ over $\mathbb{F}$. Let $\sigma \in G$. Then $\sigma' = 1 \otimes \sigma$ is a Lie automorphism of $L \otimes_\mathbb{F} \mathbb{K} = L_\mathbb{K}$. As $\mathbb{K}$ is a Galois extension of $\mathbb{F}$, an element of $L_\mathbb{K}$ lies in $L$ if and only if it is fixed by $\sigma'$ for every
$\sigma \in G$. Now $L_\mathbb{K}$ has a unique maximal subalgebra $M$, containing $\mathbb{K}x$ of codimension one in $L_\mathbb{K}$, and $\sigma'$ must fix $M$. It follows that $(M\cap L)_\mathbb{K}=M$ (see \cite[p. 54]{borel}) and so $M\cap L$ is a subalgebra of $L$ of codimension one in $L$. We must have $M\cap L=\mathbb{F}x$, which is impossible. Hence $L$ is three-dimensional simple and, as is well known, has only one-dimensional maximal subalgebras if and only if $\sqrt{\mathbb{F}} \not \subseteq \mathbb{F}$; that is, $L$ is as in (ii).
\end{proof}

\begin{proposition}\label{diam}
Let $L$ be a finite-dimensional Lie algebra over a field $\mathbb{F}$, with $\dim L\geq 2$, and let $\Gamma(L)$ be its comaximal graph. Then the induced subgraph $\Gamma(L) \setminus F(L)$ is connected and has diameter at most $3$. In particular, if $\,F(L) = 0$, then $\Gamma(L)$ is connected and has diameter at most $3$.    
\end{proposition}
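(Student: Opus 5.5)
We read $\Gamma(L)\setminus F(L)$ as the subgraph of $\Gamma(L)$ induced on the vertices \emph{not} contained in $F(L)$. By Lemma \ref{isolated points}, these are exactly the non-isolated vertices, so this subgraph is $\Gamma(L)^*$. The plan is a direct path-building argument using maximal subalgebras as intermediate vertices, in the spirit of the proof of Lemma \ref{isolated points}.

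\textbf{Step 1: maximal subalgebras are usable vertices.} Since $\dim L\geq 2$, $L$ has a nonzero proper subalgebra, for instance any line $\mathbb{F}x$. Hence every maximal subalgebra is a nontrivial proper subalgebra, i.e.\ a vertex of $\Gamma(L)$. Now let $A$ be a vertex with $A\not\subseteq F(L)$. Then there is a maximal subalgebra $M_A$ with $A\not\subseteq M_A$, so $M_A\neq A$. Maximality gives $\langle A, M_A\rangle=L$, so $A$ and $M_A$ are adjacent. In particular $M_A$ is itself non-isolated, so $M_A\not\subseteq F(L)$, and $M_A$ lies in the induced subgraph. We also record that two distinct maximal subalgebras $M\neq N$ are adjacent, since $\langle M,N\rangle$ properly contains $M$ and hence equals $L$.

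\textbf{Step 2: distance at most $3$.} Take distinct vertices $A,B$ not contained in $F(L)$, and choose $M_A$, $M_B$ as in Step 1. There are three cases.
\begin{itemize}
\item If $A\not\subseteq M_B$, then $A\sim M_B\sim B$, so $d(A,B)\le 2$. This holds even when $M_B=A$, in which case $A\sim B$ directly.
\item Symmetrically, if $B\not\subseteq M_A$, then $d(A,B)\le 2$.
\item Otherwise $A\subseteq M_B$ and $B\subseteq M_A$. Then $M_A\neq M_B$, because $A\not\subseteq M_A$ but $A\subseteq M_B$. By Step 1, $M_A\sim M_B$, which gives the walk $A\sim M_A\sim M_B\sim B$ of length $3$.
\end{itemize}
This walk can be shortened to a path if vertices repeat, so it suffices. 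All intermediate vertices lie outside $F(L)$ by Step 1, so the path stays inside the induced subgraph. Hence the subgraph is connected with diameter at most $3$.

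\textbf{Step 3: the case $F(L)=0$.} Every vertex is a nonzero subalgebra and so is not contained in $0$. The induced subgraph is therefore all of $\Gamma(L)$, which gives the final assertion.

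The only delicate point is Step 1: checking that the intermediate maximal subalgebras are genuine vertices of the induced subgraph. That is, they must be nonzero and proper, which uses $\dim L\ge 2$, and they must not lie in $F(L)$, which follows from their non-isolation together with Lemma \ref{isolated points}. The rest is routine case analysis.
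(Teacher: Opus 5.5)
Your proof is correct and is essentially the paper's argument. You choose maximal subalgebras $M_A\not\supseteq A$ and $M_B\not\supseteq B$ and route through one of them with a path of length $2$, falling back on $A\sim M_A\sim M_B\sim B$ (where $M_A\neq M_B$) otherwise; you are slightly more careful than the paper in checking that these intermediate maximal subalgebras are genuine vertices lying outside $F(L)$, and your aside ``even when $M_B=A$'' is vacuous, since $A\not\subseteq M_B$ already forces $M_B\neq A$.
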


\begin{proof}
Let $A,B$ be non-adjacent subalgebras of $L$. If $A,B\not \subseteq F(L)$ there must be maximal subalgebras $M, K$ such that $A\not \subseteq M$, $B\not \subseteq K$. If $B\not \subseteq M$ we have $A\sim M\sim B$. If $B\subseteq M$, then $M \neq K$ and $A\sim M\sim K\sim B$.
\end{proof}
\smallskip

Note that the upper bound can be attained, as is shown by the following example.

\begin{example} 
Let $L= (\mathbb{F}a\oplus B)\dot{+}\mathbb{F}x$ where $[a,x]=0$, $\dim B =2$, $B$ is abelian and $ad\,x$ acts irreducibly on $B$. Put $A=\mathbb{F}a$. 
Then $F(L)=0$ and there is no path of length 2 from $A$ to $B$.
\par

For, let $A\sim M\sim  B$ be such a path. Then $L=\langle A, M\rangle =A+M$, so $M$ is a maximal subalgebra of codimension one in $L$. Also $L=\langle B,M\rangle=B+M$ and $B\cap M$ is an ideal of $L$. Hence $B\cap M=0$ and $M$ has codimension two, a contradiction.
\end{example}

Of course, the above example is only valid if the field is not algebraically closed, which leaves open the following question. Can the upper bound in Proposition \ref{diam} be improved to two over such a field?

Before proceeding to study the comaximal graphs for low-dimensional Lie algebras over a field $\mathbb{F}$, we record two general adjacency facts that hold for any three-dimensional Lie algebra over any field $\mathbb{F}$, and that will be used repeatedly throughout the next Section without further comment. The key observation is that in dimension three, the condition $\langle A, B \rangle = L$ depends only on the dimensions of $A$ and $B$, independently of the Lie bracket structure of $L$.

\begin{lemma}\label{g-adjacency}
Let $L$ be a three-dimensional Lie algebra over a field $\mathbb{F}$,  and let $\mathcal{L}$ denote the set of lines (one-dimensional Lie subalgebras) and $\mathcal{P}$ the set of planes (two-dimensional Lie subalgebras) of $L$.
\begin{enumerate}[(1)]
\item If $A, B \in \mathcal{P}$ with $A \neq B$, then $A \sim B$ in $\Gamma(L)$. In particular, the induced subgraph 
$\Gamma(L)[\mathcal{P}]$ is always a complete graph.

\item If $A \in \mathcal{L}$ and $B \in \mathcal{P}$, then $A \sim B$ in $\Gamma(L)$ if and only if $A \not\subseteq B$.
\end{enumerate}

\begin{proof}
\begin{enumerate}[(1)]
\item Since $A$ and $B$ are distinct two-dimensional subspaces of the three-dimensional space $L$, we have $\dim(A + B) = 3$, hence $\langle A, B \rangle = A + B = L$, and therefore $A \sim B$.
    
\item Let $A\in\mathcal{L}$ and $B\in\mathcal{P}$. If $A\subseteq B$, then $\langle A,B\rangle =B\neq L$, so $A$ and $B$ are not adjacent. Conversely, if $A\not\subseteq B$, then $\langle A,B\rangle = L$, and $A$ and $B$ are adjacent.
\end{enumerate}
\end{proof}
\end{lemma}

\section{The comaximal graphs for low-dimensional Lie algebras over finite fields}

\subsection{Comaximal graph of a 1-dimensional Lie algebra}
Let $L$ be a $1$-dimensional Lie algebra over the field $\F_q$. Then the comaximal graph $\Gamma(L)$ is the empty graph.

\subsection{Comaximal graph of a 2-dimensional Lie algebra}
Let $L$ be a $2$-dimensional Lie algebra over the field $\F_q$. Up to isomorphism, there are exactly 2 Lie algebras of dimension 2 over $\mathbb{F}_q$: the abelian one, and the non-abelian one with bracket $[x, y] = y$ (see \cite{humph}, Chapter 1). In both cases, every one-dimensional subspace of $L$ is a Lie subalgebra. Then the comaximal graph $\Gamma(L)$ is the complete graph $K_{q+1}$. In fact, the set of vertices $\mathcal{V}$ has $q+1$ elements. If $A, B\in \mathcal{V}$, with $A\neq B$, then $\langle A, B\rangle =A+B=L$. That is, every pair of distinct vertices is adjacent in the comaximal graph, and so  $\Gamma(L)$ is the complete graph $K_{q+1}$.

\subsection{Comaximal graph of a 3-dimensional Lie algebra}
Throughout this subsection, $L$ denotes a three-dimensional Lie algebra over the finite field $\mathbb{F}_q$. A unifying feature of the cases considered below is that, whenever $\dim L' \geq 1$, the algebra $L$ admits a decomposition $L = \langle x \rangle \ltimes V$, where $V$ is a two-dimensional ideal. The structure of $L$ is then determined by the linear transformation
\[T = \ad\, x\big|_V \in \mathrm{End}(V).\]
In this framework, the geometry of the comaximal graph $\Gamma(L)$ is governed by how $T$ acts on the one-dimensional subspaces of $V$. In particular, the distinction between $T$-invariant and non-invariant lines in $V$ plays a central role in determining adjacency between vertices. The different cases correspond to different types of the operator $T$:
\begin{itemize}
    \item In the Heisenberg case (Case 2A), $T$ is nilpotent;
    \item In the solvable non-nilpotent case with $\dim L' = 1$ (Case 2B), the image of $T$ is one-dimensional;
    \item In the case $\dim L' = 2$ (Case 3), the operator $T$ is invertible, and its Jordan type over $\mathbb{F}_q$ determines the number of two-dimensional subalgebras.
\end{itemize}
This notation and decomposition will be used without further comment throughout all cases below.

\begin{remark}\label{lines-count}
Using the notation introduced in Lemma \ref{g-adjacency}, since every one-dimensional subspace of a Lie algebra is automatically a Lie subalgebra, the set $\mathcal{L}$ coincides with the set of all one-dimensional subspaces of $L$, and therefore $|\mathcal{L}| = \tfrac{q^3 - 1}{q - 1} = q^2 + q + 1$. This fact holds independently of the Lie bracket structure of $L$, and will be used without further comment in all cases below. The number $|\mathcal{P}|$, by contrast, depends on the structure of $L$ and will be determined case by case.
\end{remark}


\textbf{Case 1. $\dim L'=0$ - The abelian algebra.} In this case, every subspace of $L$ is a Lie subalgebra. Hence, the proper nonzero Lie subalgebras of $L$ are all the 1-dimensional and 2-dimensional vector subspaces of the vector space $L$. The number of lines and planes in $L$ are respectively $q^2+q+1$.  

\begin{proposition}
The comaximal graph $\Gamma(L)$ has vertex set $\mathcal{V} = \mathcal{L} \sqcup \mathcal{P}$, and adjacency is described as follows:
\begin{enumerate}
\item If $A, B\in \mathcal{P}$ with $A\neq B$, then they are adjacent. Thus, the induced subgraph on $\mathcal{P}$ is the complete graph $K_{q^2+q+1}$.
\item If $A, B\in \mathcal{L}$, then they are never adjacent. Thus, the induced subgraph on $\mathcal{L}$ is an independent set.
\item For $A\in \mathcal{L}$ and $B\in \mathcal{P}$ one has $A\sim B$ if and only if $A\not\subseteq B$.
\end{enumerate}
In particular, $\deg(B)=2q^2+q$ for every $B\in \mathcal{P}$ and $\deg(A)=q^2$ for every $A\in \mathcal{L}$.
\end{proposition}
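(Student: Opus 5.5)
Since $L$ is abelian, every subspace of $L$ is a subalgebra, and for any two subalgebras $A,B$ the subalgebra they generate is simply the subspace sum: $\langle A,B\rangle = A+B$. So adjacency in $\Gamma(L)$ reduces to the purely linear condition $A+B=L$, and the vertex set is the set of all $1$- and $2$-dimensional subspaces of $\mathbb{F}_q^3$. By Remark \ref{lines-count}, $|\mathcal{L}|=q^2+q+1$. By duality (a plane is the kernel of a nonzero linear functional up to scalar), $|\mathcal{P}|=q^2+q+1$ as well.

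Items (1) and (3) are exactly Lemma \ref{g-adjacency}(1) and (2), which hold for any three-dimensional Lie algebra, so I will simply cite them. For item (2), take distinct lines $A,B\in\mathcal{L}$. Then $\langle A,B\rangle=A+B$ has dimension $2<3$, so $A$ and $B$ are never adjacent. Thus $\mathcal{L}$ is independent and the induced subgraph on $\mathcal{P}$ is $K_{q^2+q+1}$.

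The only real work is the degree count, which needs two incidence numbers in the projective plane $\mathrm{PG}(2,q)$:
\begin{itemize}
\item each plane contains exactly $q+1$ lines, namely the one-dimensional subspaces of $\mathbb{F}_q^2$;
\item each line lies in exactly $q+1$ planes, namely the planes of $L$ containing $A$, which correspond to the lines of $L/A\cong\mathbb{F}_q^2$.
\end{itemize}

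For $B\in\mathcal{P}$:
\begin{itemize}
\item $B$ is adjacent to the other $q^2+q$ planes;
\item $B$ is adjacent to the $(q^2+q+1)-(q+1)=q^2$ lines not contained in it.
\end{itemize}
Hence $\deg B=2q^2+q$.

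For $A\in\mathcal{L}$, the vertex $A$ has no line neighbours, and it is adjacent to the $(q^2+q+1)-(q+1)=q^2$ planes not containing it. Hence $\deg A=q^2$.

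No step is a genuine obstacle. The only point needing care is correctly justifying the two incidence counts, which I will do via the quotient and subspace correspondences above.
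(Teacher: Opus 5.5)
Your proposal is correct and follows the paper's proof essentially step for step. Like the paper, you cite Lemma~\ref{g-adjacency} for (1) and (3), use that $\langle A,B\rangle = A+B$ is two-dimensional for (2), and get the degrees from the fact that each plane contains $q+1$ lines and each line lies in $q+1$ planes. Your duality and quotient arguments only make explicit the counts $|\mathcal{P}|=q^2+q+1$ and the $q+1$ incidences, which the paper states without justification.
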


\begin{proof}
Clearly the vertex set $\mathcal{V}$ of $\Gamma(L)$ is $\mathcal{L} \sqcup \mathcal{P}$.

\smallskip

\noindent (1) By Lemma \ref{g-adjacency} (1), the induced subgraph on $\mathcal{P}$ is complete.


\smallskip

\noindent (2) Let $A, B\in \mathcal{L}$ with $A\neq B$. Then $\dim(A+B)=2$. Since $L$ is abelian, $\langle A,B\rangle =A+B$, so $\langle A,B\rangle \neq L$. Hence, $A$ and $B$ are not adjacent. Since this holds for every pair of distinct lines, the induced subgraph on $\mathcal{L}$ is an independent set.

\smallskip

\noindent (3) This follows immediately from Lemma~\ref{g-adjacency} (2).


\smallskip

Finally, note that a $2$-dimensional subspace $B$ contains exactly $q+1$ distinct lines, so there are $(q^2+q+1)-(q+1)=q^2$ lines not contained in $B$, each adjacent to $B$. Moreover, there are $q^2+q$ other planes, all adjacent to $B$ by (1). Hence $\deg(B) = q^2 + (q^2+q) = 2q^2 + q$.
Similarly, a $1$-dimensional subspace $A$ is contained in exactly $q+1$ planes, so there are $(q^2+q+1)-(q+1)=q^2$ planes not containing $A$, all adjacent to $A$ by (3), and no adjacent lines by (2). Therefore $\deg(A)=q^2$.
\end{proof}

\begin{figure}[H]
\centering
\begin{tikzpicture}[
    scale=1.3,
    plane/.style={circle, draw=red, fill=red, minimum size=2.5mm, inner sep=2pt},
    linev/.style={circle, draw=blue, fill=blue, minimum size=2.5mm, inner sep=2pt},
    every edge/.style={draw=black}
]

\def\rinner{1.5}   
\def\router{2.8}     

\foreach \i in {0,...,6}{
  \pgfmathsetmacro{\ang}{360/7*\i}
  \node[plane] (P\i) at ({\rinner*cos(\ang)}, {\rinner*sin(\ang)}) {};
}

\foreach \i in {0,...,6}{
  \pgfmathsetmacro{\ang}{360/7*\i + 360/14}
  \node[linev] (L\i) at ({\router*cos(\ang)}, {\router*sin(\ang)}) {};
}

\foreach \i in {0,...,6}{
  \foreach \j in {0,...,6}{
    \ifnum\i<\j
      \draw[very thick] (P\i)--(P\j);
    \fi
  }
}

\foreach \p/\a/\b/\c/\d in {
  0/3/4/5/6,
  1/1/2/5/6,
  2/1/2/3/4,
  3/0/2/4/6,
  4/0/2/3/5,
  5/0/1/4/5,
  6/0/1/3/6}
{
  \foreach \l in {\a,\b,\c,\d}{
    \draw[very thick] (P\p)--(L\l);
  }
}
\end{tikzpicture}

\qquad

\caption{The comaximal graph of a 3-dimensional abelian Lie algebra over $\mathbb{F}_2$.}
\label{fig:placeholder}
\end{figure}

\textbf{Case 2A. $\dim L'=1$ - The nonabelian nilpotent algebra (The Heisenberg algebra).}
Let $B= (e,f,h)$ be a basis for $L$, with brackets $[e,f]=h$, and $[e,h]=[f,h]=0$. Let $Z:= Z(L) = L' = \langle h\rangle$ be its center and derived algebra. Note that in this case, the derived algebra is central, so the bracket may produce a new independent direction, which explains the existence of edges between lines.

\begin{proposition}\label{heisenberg}
The line $Z$ is central, and the other $q^2+q$ lines are noncentral. Further, the set $\mathcal{P}$ consists of the $q+1$ planes containing $Z$. The comaximal graph $\Gamma(L)$ has vertex set $V = \mathcal{L} \sqcup \mathcal{P}$, and adjacency is given by:
\begin{enumerate}[(1)]
\item If $A, B \in \mathcal{P}$ with $A \neq B$, then $A \sim B$. Hence $\Gamma(L)[\mathcal{P}]\cong K_{q+1}$. That is, the induced subgraph on $\mathcal{P}$ is the complete graph $K_{q+1}$.

\item The central line $Z$ is an isolated point in $\Gamma(L)$, and two noncentral lines $A, B \in \mathcal{L}$ are adjacent if and only if they are not both contained in the same plane of $\mathcal{P}$. Consequently, the induced subgraph on $\mathcal{L} \setminus \{Z\}$ is the complete multipartite graph $K_{\underbrace{q,\ldots, q}_{q+1}}$, with $q+1$ parts of size $q$.

\item For $A \in \mathcal{L}$ and $B \in \mathcal{P}$, $A \sim B$ if and only if $A \not\subseteq B$. In particular, $Z$ is an isolated vertex.
\end{enumerate}
Further, every noncentral line has degree $q^2 + q$, and every plane has degree $q^2 + q$.
\end{proposition}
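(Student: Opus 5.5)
The plan is to use the explicit basis $e,f,h$ and the fact that $[u,v]$ is a scalar multiple of $h$ for all $u,v\in L$. Write $u=\alpha e+\beta f+\gamma h$ and $v=\alpha' e+\beta' f+\gamma' h$. Then $[u,v]=(\alpha\beta'-\beta\alpha')h$, so $Z=\mathbb{F}_q h$ is the center and every line other than $Z$ is noncentral. That gives $q^2+q$ noncentral lines by Remark \ref{lines-count}.

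Next I will classify the planes. If a plane $P$ contains $Z$, then $[P,P]\subseteq Z\subseteq P$, so $P$ is a subalgebra. There are $q+1$ such planes, corresponding to the lines of $L/Z$.

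Conversely, suppose $P$ is a two-dimensional subalgebra with $Z\not\subseteq P$. Then $P\cap Z=0$, so the projection $P\to L/Z$ is an isomorphism. Hence $P$ contains $u,v$ whose images $\bar u,\bar v$ form a basis of $L/Z$, and then $[u,v]=\det(\bar u,\bar v)h$ is a nonzero multiple of $h$. Closure forces $h\in P$, a contradiction. So $\mathcal{P}$ is exactly the $q+1$ planes through $Z$.

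Items (1) and (3) then follow directly from Lemma \ref{g-adjacency}. For (2), I treat $Z$ first. Every maximal subalgebra contains $Z$: a codimension-one subalgebra is a plane, and the only other possibility would be a maximal line, which cannot occur since every line lies in a plane through $Z$. Hence $Z\subseteq F(L)$, and Lemma \ref{isolated points} shows $Z$ is isolated.

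Now take distinct noncentral lines $A=\mathbb{F}u$ and $B=\mathbb{F}v$. If $\bar u,\bar v$ are dependent in $L/Z$, then $A,B$ lie in the common plane $\mathbb{F}u+Z\in\mathcal{P}$, so $\langle A,B\rangle$ is proper. If they are independent, then $[u,v]$ is a nonzero multiple of $h$, so $\langle A,B\rangle\supseteq \mathbb{F}u+\mathbb{F}v+\mathbb{F}h=L$. Also, two noncentral lines lie in the same plane of $\mathcal{P}$ exactly when their images in $L/Z$ coincide. Since each such plane contains $q$ noncentral lines and the planes partition the noncentral lines, the induced graph on $\mathcal{L}\setminus\{Z\}$ is $K_{q,\ldots,q}$ with $q+1$ parts.

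For the degrees, a noncentral line $A$ is adjacent to all noncentral lines outside its part, giving $(q^2+q)-q=q^2$. It is not adjacent to $Z$. Among the planes, it lies in exactly one, $A+Z$, so it is adjacent to the other $q$. Its total degree is $q^2+q$.

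A plane $B$ is adjacent to the other $q$ planes. Its lines are $Z$ together with $q$ noncentral lines, so it is adjacent to the remaining $q^2+q+1-(q+1)=q^2$ lines, giving degree $q^2+q$.

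The main obstacle is the classification of the planes, namely showing that no two-dimensional subalgebra avoids $Z$. This rests entirely on the determinant formula for the bracket, which I expect to go through cleanly. The rest is counting.
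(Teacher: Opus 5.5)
Your proof is correct and follows the paper's argument essentially step for step: the determinant formula $[u,v]=(\alpha\beta'-\beta\alpha')h$ classifies the planes (through the isomorphism $P\to L/Z$) and decides adjacency of noncentral lines, (1) and (3) come from Lemma \ref{g-adjacency}, and the same counts give the degrees. The only deviation is how you isolate $Z$: you note that the maximal subalgebras are exactly the planes through $Z$, so $Z\subseteq F(L)$ and the Frattini characterization of isolated vertices applies, whereas the paper checks directly that $Z$ brackets trivially with every line and lies in every plane; both arguments are valid.
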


\begin{proof}
By Remark \ref{lines-count}, $|\mathcal{L}| = q^2 + q + 1$. The center $Z = \langle h \rangle$ is one of these lines; the others are noncentral.

We prove now that the $2$-dimensional Lie subalgebras of $L$ are precisely the planes containing $h$. In fact, let $U$ be a $2$-dimensional subspace of $L$. If $h \in U$ then $[U,U] \subseteq \F_q h \subseteq U$, so $U$ is a subalgebra. 
If $h \notin U$, since $\dim L/\langle h\rangle =2$, the image of $U$ in $L/\langle h\rangle$ under the canonical projection $\pi$ is all of $L/\langle h\rangle$. Choose $u_1=a_1e+b_1f+c_1h$, $u_2=a_2e+b_2f+c_2h \in U$ such that $\{\pi(u_1), \pi(u_2)\}$ form a basis of $L/\langle h\rangle$. The linear independence of $\{\pi(u_1), \pi(u_2)\}$ in $L/\langle h\rangle$ means that $a_1b_2-a_2b_1\neq 0$. Hence
\[[u_1,u_2]=(a_1b_2-a_2b_1)h\neq 0.\]
Thus $[u_1,u_2]$ is a nonzero element of $\langle h\rangle$. Since $h\notin U$, we have $\langle h\rangle\cap U= 0$, and therefore $[u_1,u_2]\notin U$. It follows that $U$ is not closed under the bracket, and therefore $U$ is not a Lie subalgebra. This proves that the $2$-dimensional Lie subalgebras are precisely the planes containing $Z$, and their number is $q+1$.

\qquad

It is clear that $\mathcal{V} = \mathcal{L}\sqcup \mathcal{P}$. We now verify the adjacency rules.

\noindent \emph{(1)} Let $A, B \in \mathcal{P}$ with $A \neq B$ containing $Z$, then $A\cap B = Z$, and the rest follows from Lemma~\ref{g-adjacency} (1).


\smallskip

\noindent  \emph{(2)} Let $A = \langle a \rangle$ and $B = \langle b \rangle$ be two distinct lines in $\mathcal{L}$, with $a = a_1e+b_1f+c_1h$ and $b = a_2 e + b_2 f + c_3 h$. As above we have $[a,b] = (a_1 b_2 - a_2b_1) h$. If $[a,b]\neq 0$, then $h\in \langle A,B\rangle$, and therefore $\langle A, B\rangle =L$ and so $A \sim B$. If $[a,b] = 0$, then $A+B$ is an abelian $2$-dimensional subalgebra, so $\langle A, B\rangle = A+B \neq L$. Hence, $A \not\sim B$.

Furthermore, the condition $a_1 b_2 - a_2b_1  = 0$ means that the vectors $(a_1, b_1)$ and $(a_2, b_2)$ are proportional in $\F_q^2$, that is, $a$ and $b$ have proportional projections onto $\langle e, f \rangle$ modulo $\langle h \rangle$.
This holds if and only if $a$ and $b$ lie in the same plane of $\mathcal{P}$, that is, $A$ and $B$ are both contained in some $P \in \mathcal{P}$. In particular, if $A = Z = \langle h \rangle$, then $a_1 = b_1 = 0$, so $a_1 b_2 - a_2b_1 = 0$ for every choice of $B$. Hence, $Z$ is not adjacent to any line, confirming that $Z$ is an isolated point.

For noncentral lines, the $q^2 + q$ elements of $\mathcal{L} \setminus \{Z\}$ are partitioned into $q+1$ groups of size $q$ according to which plane of $\mathcal{P}$ contains them. Two noncentral lines are adjacent if and only if they belong to different groups. Therefore, the induced subgraph on $\mathcal{L} \setminus \{Z\}$ is the complete multipartite graph $K_{\underbrace{q,\ldots,q}_{q+1}}$.

\smallskip

\noindent  \emph{(3)} This follows immediately from Lemma~\ref{g-adjacency} (2).

Finally, let $B \in \mathcal{P}$. By (1), $B$ is adjacent to the other $q$ planes. By (3), $B$ is adjacent to the
lines not contained in $B$. Each plane contains exactly $q+1$ lines (the central line $Z$ and $q$ noncentral lines), so the number of lines not in $B$ is $(q^2+q+1) - (q+1) = q^2$. Hence $\deg(B) = q + q^2= q^2+q$.

The central line $Z$ is contained in every plane of $\mathcal{P}$ and is not adjacent to any line, so $\deg(Z) = 0$.
Let $A =\langle a \rangle$, with $a = a_1e+b_1f+c_1h$, be a noncentral line. Since $A$ is noncentral, it is contained in exactly one plane, namely $\langle A, h \rangle \in \mathcal{P}$. As $|\mathcal{P}|= q+1$, by (3)  $A$ is adjacent to the $q$ planes not containing it.

On the other hand, let  $A$ be a noncentral line. Then $A$ is contained in a unique plane, namely, $P_A = \langle A, h\rangle\in \mathcal{P}$. The lines not adjacent to $A$ are precisely the lines contained in $P_A$, excluding $A$ itself. Since a plane contains exactly $q+1$ lines, this yields exactly $q$ lines not adjacent to $A$. As $|\mathcal{L}\setminus A| = q^q+q$, it follows that $A$ is adjacent to $q^2$ lines. Moreover, $A$ is adjacent to the $q$ planes that do not contain it. Therefore, $\deg(A) = q^2+q$.
\end{proof}

\begin{figure}[H]
\centering
\begin{tikzpicture}[
    scale=1.3,
    plane/.style={circle, draw=red, fill=red, minimum size=2.5mm, inner sep=2pt},
    linev/.style={circle, draw=blue, fill=blue, minimum size=2.5mm, inner sep=2pt},
    central/.style={circle, draw=black, fill=black, minimum size=2.5mm, inner sep=2pt},
    every edge/.style={draw=black}
]

\node[plane] (P1) at (90:3) {};
\node[plane] (P2) at (210:3) {};
\node[plane] (P3) at (330:3) {};

\node[linev] (A1) at (90:2) {};
\node[linev] (A2) at (90:1) {};

\node[linev] (B1) at (210:2) {};
\node[linev] (B2) at (210:1) {};

\node[linev] (C1) at (330:2) {};
\node[linev] (C2) at (330:1) {};

\node[central] (Z) at (0,0) {};

\draw[very thick] (P1)--(P2)--(P3)--(P1);

\foreach \a in {A1,A2}{
  \foreach \b in {B1,B2}{
    \draw[very thick] (\a)--(\b);
  }
  \foreach \c in {C1,C2}{
    \draw[very thick] (\a)--(\c);
  }
}
\foreach \b in {B1,B2}{
  \foreach \c in {C1,C2}{
    \draw[very thick] (\b)--(\c);
  }
}

\foreach \a in {A1,A2}{
  \draw[very thick] (\a)--(P2);
  \draw[very thick] (\a)--(P3);
}
\foreach \b in {B1,B2}{
  \draw[very thick] (\b)--(P1);
  \draw[very thick] (\b)--(P3);
}
\foreach \c in {C1,C2}{
  \draw[very thick] (\c)--(P1);
  \draw[very thick] (\c)--(P2);
}
\end{tikzpicture}

\qquad

\caption{The comaximal graph $\Gamma(H_3(\mathbb F_2))$. Blue vertices are noncentral lines, red vertices are planes, and the isolated black vertex is the central line $Z=\langle h\rangle$.}
\end{figure}

\begin{corollary}\label{regularity}
The graph $\Gamma(L)^*$ is $(q^2+q)$-regular of order $(q+1)^2$.
\end{corollary}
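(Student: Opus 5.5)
The corollary follows by reading off the vertex count and the degrees from Proposition~\ref{heisenberg}, once we know which vertices $\Gamma(L)^*$ removes.

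\textbf{Step 1: identify the isolated vertices.} By Proposition~\ref{heisenberg}, every noncentral line has degree $q^2+q$ and every plane has degree $q^2+q$. Since $q\ge 2$, this degree is positive, so none of these vertices is isolated. The central line $Z$ has degree $0$ by part (2) of Proposition~\ref{heisenberg}. Hence $Z$ is the unique isolated vertex. As a consistency check, Lemma~\ref{isolated points} identifies the isolated vertices as those contained in $F(L)$. The maximal subalgebras of $L$ are exactly the $q+1$ planes containing $Z$, and their intersection is $F(L)=Z$. So $Z$ is the only nontrivial proper subalgebra inside $F(L)$, which confirms the claim.

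\textbf{Step 2: count the vertices.} Removing $Z$ leaves $\mathcal{V}\setminus\{Z\}$. This consists of $q^2+q$ noncentral lines (using Remark~\ref{lines-count}) and $q+1$ planes. The order is therefore
\[(q^2+q)+(q+1)=q^2+2q+1=(q+1)^2.\]

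\textbf{Step 3: degrees are unchanged in $\Gamma(L)^*$.} Deleting an isolated vertex removes no edges. So each remaining vertex keeps its degree from $\Gamma(L)$, which is $q^2+q$ by Proposition~\ref{heisenberg}. This proves regularity.

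There is no real obstacle here. The only care needed is to recheck the line degree in Proposition~\ref{heisenberg}. A noncentral line $A$ lies in the unique plane $\langle A,h\rangle$. It is non-adjacent to the other $q$ lines in that plane and to $Z$, and adjacent to the remaining $q^2$ lines. It is also adjacent to the $q$ planes not containing it. This gives $q^2+q$.

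As a final sanity check, the degree of a line is the same whether or not $Z$ is counted, since $Z$ is not adjacent to it.
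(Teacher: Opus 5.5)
Your proof is correct and follows the paper's own argument: read off the degrees $q^2+q$ from Proposition~\ref{heisenberg}, then count $(q^2+q)+(q+1)=(q+1)^2$ vertices. You also make explicit two points the paper leaves implicit, namely that $Z$ is the only isolated vertex and that deleting it changes no degrees. One small wording slip: $Z$ is already one of the other $q$ lines of $\langle A,h\rangle$, so ``and to $Z$'' is redundant, though your count of $q^2$ adjacent lines is right.
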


\begin{proof}
By Proposition~\ref{heisenberg}, every vertex in $V \setminus \{Z\}$ has degree $q^2+q$. The order of $V \setminus \{Z\}$ is $(q^2+q+1) -1 + (q+1) = (q+1)^2$.
\end{proof}

\textbf{Case 2B. $\dim L'=1$ - A solvable non-nilpotent algebra.}
In this case, $L' \neq Z(L)$. There exists a basis $\{x,y,z\}$ of $L$ such that the Lie bracket is given by $[x,y] = y$, and $[x,z] = [y,z] = 0$. In this situation we have $L' = \langle y \rangle$, and $Z(L) = \langle z \rangle$. 

Unlike the Heisenberg case, the line $L'=\langle y\rangle$ is not central and is not contained in every plane.
Therefore, $\Gamma(L)$ has no isolated vertex arising from centrality. Moreover, the number of planes differs significantly from both the abelian and Heisenberg cases, reflecting the non-nilpotent solvable structure of $L$. Here, the adjoint action satisfies $L' \subseteq \langle y \rangle$, so brackets remain confined to a proper subspace, and adjacency between lines is more restrictive.

\begin{proposition}\label{dim3-derived1-noncentral}
The elements of $\mathcal{P}$ are of the following two types:
\begin{enumerate}[(i)]
\item  the $q+1$ planes containing $L'=\langle y\rangle$;
\item  the $q$ abelian planes $P_\alpha := \langle x+\alpha y,\; z\rangle$, with $\alpha\in \mathbb{F}_q$, which do not contain $y$. In particular, $|\mathcal{P}| = 2q+1$.
\end{enumerate}

The comaximal graph $\Gamma(L)$ has vertex set $V(\Gamma(L))=\mathcal{L}\sqcup \mathcal{P}$, and adjacency is described as follows:
\begin{enumerate}[(1)]
\item If $A, B\in \mathcal{P}$ with $A\neq B$, then $A\sim B$. Hence, the induced subgraph on $\mathcal{P}$ is the complete graph $K_{2q+1}$.
\item  If $A\in \mathcal{L}$ and $P\in \mathcal{P}$, then $A\sim P$ if and only if $A\not\subseteq P$.

\item Let $V := \langle y, z\rangle$. Then the adjacency between lines is described as follows:

\begin{enumerate}
\item The lines $\langle y\rangle$ and $\langle z\rangle$ are not adjacent to any line.

\item If $A \leq V$ with $\langle y\rangle \neq A \neq\langle z\rangle$, then $A$ is adjacent to every line not contained in $V$, and to no line contained in $V$.

\item Every line not contained in $V$ can be written uniquely as $X_{a,b} = \langle x+a y+b z\rangle$, with $a, b\in\F_q$. 
For two such lines $X_{a,b}$ and $X_{c,d}$ one has $X_{a,b}\sim X_{c,d}$ if and only if $a\neq c$ and $b\neq d$.
\end{enumerate}
\item The degrees of the vertices are as follows:
\begin{enumerate}
\item $\deg(P) = q^2 + 2q$, for every $P \in \mathcal{P}$. 
\item $\deg(\langle y\rangle) = \deg(\langle z\rangle) = q$.
\item If $A \leq V$ with $\langle y\rangle \neq A \neq\langle z\rangle$, then $\deg(A) = q^2 + 2q$.
\item If $A \not\leq V$, then $\deg(A) = q^2 + q - 1$.
\end{enumerate}
\end{enumerate}
\end{proposition}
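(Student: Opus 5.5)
The plan is to reduce everything to linear algebra on two-dimensional subspaces of $L$, using the bracket $[x,y]=y$, $[x,z]=[y,z]=0$. The key observation is $[L,L]=\langle y\rangle$, so for any $u,v\in L$ the bracket $[u,v]$ is a multiple of $y$. Two further facts will be used throughout.

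\begin{itemize}
\item[(a)] For distinct lines $A=\langle a\rangle$ and $B=\langle b\rangle$, let $W=A+B$. If $W$ is a subalgebra, then $\langle A,B\rangle=W\neq L$. If $W$ is not a subalgebra, then $[a,b]\notin W$, so $\langle A,B\rangle\supseteq W+\mathbb{F}_q[a,b]=L$. Hence $A\sim B$ if and only if $A+B\notin\mathcal{P}$.
\item[(b)] Lemma \ref{g-adjacency} already gives items (1) and (2).
\end{itemize}

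So the real work is the classification of $\mathcal{P}$, after which (3) and (4) become counting.

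\textbf{Classifying the planes.} Let $U$ be a two-dimensional subspace.
\begin{itemize}
\item If $y\in U$, then $[U,U]\subseteq\langle y\rangle\subseteq U$, so $U$ is a subalgebra. There are exactly $q+1$ such planes, namely the planes through a fixed line.
\item If $y\notin U$, then $U\neq V$. Since $\dim(U\cap V)\geq 1$, we can write $U=\langle x+\alpha y+\beta z,\ v\rangle$ with $0\neq v=\lambda y+\mu z\in V$.
\begin{itemize}
\item Then $[x+\alpha y+\beta z, v]=\lambda y$.
\item Closure under the bracket with $y\notin U$ forces $\lambda=0$, so $v\in\langle z\rangle$.
\item Hence $U=\langle x+\alpha y,z\rangle=P_\alpha$, and each $P_\alpha$ is abelian since $z$ is central.
\end{itemize}
\end{itemize}
Distinct $\alpha$ give distinct planes, so $|\mathcal{P}|=2q+1$.

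\textbf{Adjacency between lines.} Apply (a) in each case.
\begin{itemize}
\item For $\langle y\rangle$: every plane through $y$ is a subalgebra, so $\langle y\rangle$ is adjacent to no line.
\item For $\langle z\rangle$: $z$ is central, so $\langle z,b\rangle$ is abelian, and $\langle z\rangle$ is adjacent to no line.
\item For $A=\langle y+\beta z\rangle$ with $\beta\neq 0$:
\begin{itemize}
\item its span with another line of $V$ is $V\in\mathcal{P}$, so they are not adjacent;
\item its span with a line $X_{a,b}$ outside $V$ neither contains $y$ nor equals any $P_\alpha$, because $P_\alpha\cap V=\langle z\rangle$. So it is adjacent to every line outside $V$.
\end{itemize}
\item Every line not in $V$ has a unique generator with $x$-coefficient $1$, which gives the form $X_{a,b}$. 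For $X_{a,b}$ and $X_{c,d}$, the span $W$ contains the difference $(a-c)y+(b-d)z$. Then $W\in\mathcal{P}$ exactly when this difference lies in $\langle y\rangle$ (that is, $b=d$, and then $y\in W$) or in $\langle z\rangle$ (that is, $a=c$, and then $W=P_a$). This proves (3)(c).
\end{itemize}

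\textbf{Degrees.} Each count uses the number of planes containing the vertex together with (b).
\begin{itemize}
\item A plane contains $q+1$ lines, so it meets $q^2$ lines and $2q$ other planes, giving $\deg(P)=q^2+2q$.
\item $\langle y\rangle$ lies in exactly the $q+1$ planes of type (i). Likewise $\langle z\rangle$ lies in $V$ and in the $q$ planes $P_\alpha$. Each therefore has degree $(2q+1)-(q+1)=q$.
\item A line $A\leq V$ other than $\langle y\rangle,\langle z\rangle$ lies only in $V$. It is adjacent to $2q$ planes and to all $q^2$ lines outside $V$, giving $\deg(A)=q^2+2q$.
\item $X_{a,b}$ lies in exactly two planes, $\langle y, X_{a,b}\rangle$ and $P_a$, so it meets $2q-1$ planes. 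It meets $q-1$ lines of $V$ and $(q-1)^2$ lines $X_{c,d}$, for a total of $q^2+q-1$.
\end{itemize}

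The main obstacle is the plane classification, specifically ruling out subalgebras $U$ with $y\notin U$ other than the $P_\alpha$. The argument above handles this via $U\cap V\neq 0$ together with the computation $[x+\alpha y+\beta z,\lambda y+\mu z]=\lambda y$.
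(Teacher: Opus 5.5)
Your proof is correct and follows the paper's route. You split according to whether $y\in U$ to get $|\mathcal{P}|=2q+1$, use Lemma~\ref{g-adjacency} for (1)--(2), and count the planes through each vertex for the degrees. The only difference is that you decide line--line adjacency uniformly via $A\sim B\iff A+B\notin\mathcal{P}$ (the criterion the paper itself uses in Proposition~\ref{dim3-derived2}(3)(c)), whereas the paper computes $\langle A,B\rangle$ directly from brackets such as $[X_{a,b},X_{c,d}]=(c-a)y$. Separately, your $U\cap V$ argument makes explicit why $z\in U$, which the paper's step of replacing $x+\alpha y+\beta z$ by $x+\alpha y$ takes for granted; this is harmless there because $z$ is central.
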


\begin{proof}
By Remark \ref{lines-count}, $|\mathcal{L}| = q^2 + q + 1$. In the notation of the introduction of this subsection, we have $V = \langle y, z \rangle$, which is an abelian ideal of $L$ and its 
nilradical. The operator $T = \ad x|_V$ satisfies $\mathrm{Im}(T) = \langle y \rangle$, so $L' = \langle y \rangle \subseteq V$.


We first determine the two-dimensional subalgebras of $L$. Let $U$ be a $2$-dimensional subspace of $L$.
If $y\in U$, then $[U,U]\subseteq \langle y\rangle \subseteq U$, so $U$ is a Lie subalgebra. There are $q+1$ such planes.
If $y\notin U$, then $U$ is a Lie subalgebra if and only if it is abelian. Since $U\not\subseteq V$, it contains an element of the form $x+\alpha y+\beta z$, with $\alpha, \beta\in \F_q$. Replacing this element by $x+\alpha y$, we may assume $x+\alpha y \in U$. 

If $u\in U$, then $U$ is abelian if and only if $[x+\alpha y, u]=0$. Writing $u=ax+by+cz$, this condition gives $b=\alpha a$, hence $u \in \langle x+\alpha y, z\rangle$. Thus $U=\langle x+\alpha y, z\rangle =: P_\alpha$. Therefore the $2$-dimensional subalgebras are exactly the $q+1$ planes containing $\langle y\rangle$ together with the $q$ planes $P_\alpha$, and so $|\mathcal{P}|=2q+1$.

\noindent (1) By Lemma \ref{g-adjacency} (1), the induced subgraph on $\mathcal{P}$ is complete.

\noindent (2) This follows immediately from Lemma~\ref{g-adjacency} (2).

\noindent (3) We now describe adjacency between lines.

The lines $\langle y\rangle$ and $\langle z\rangle$ are not adjacent to any line: $\langle z\rangle$ is central, and $\langle y\rangle$ together with any line generates a subalgebra contained in a proper $2$-dimensional subspace.

If $A, B\leq V$, then $\langle A, B\rangle \leq V \neq L$, so they are not adjacent.

Let $A\leq V$ with $\langle y\rangle \neq A \neq\langle z\rangle$. Then $A=\langle y+t z\rangle$ for some $t\neq 0$. If $B\not\leq V$, then $B$ contains a nonzero $x$-component, and $[y+t z, B] \subseteq \langle y\rangle \setminus \{0\}$.
Hence $y\in \langle A,B\rangle$, and since $A$ also contains a nonzero $z$-component, we obtain $z$. Together with the $x$-component of $B$, this yields $\langle A,B\rangle=L$. Thus, such $A$ is adjacent to every line not contained in $V$.

Now let $A= X_{a,b}$ and $B= X_{c,d}$ be lines not contained in $V$. Then $[A,B]=(c-a)y$.

If $a=c$, then $\langle A,B\rangle=A+B$ is $2$-dimensional, so $A\not\sim B$.

If $a\neq c$, then $y\in \langle A,B\rangle$. Subtracting generators gives $B-A=(c-a)y+(d-b)z$.
Thus $z\in \langle A,B\rangle$ if and only if $b\neq d$. If both $y$ and $z$ lie in $\langle A, B\rangle$, then $x$ is recovered from either generator, and hence $\langle A, B\rangle=L$. Therefore, $A\sim B$ if and only if $a\neq c$ and $b\neq d$.

\noindent (4) Finally, we compute degrees. Each plane is adjacent to the other $2q$ planes and to the $q^2$ lines not contained in it, so $\deg(P)=q^2+2q$.

The lines $\langle y\rangle$ and $\langle z\rangle$ are adjacent only to the $q$ planes not containing them, so their degree is $q$.

If $A\leq V$ and $A\neq \langle y\rangle,\langle z\rangle$, then $A$ is adjacent to all $q^2$ lines outside $V$ and to $2q$ planes, so $\deg(A)=q^2+2q$.

If $A\not\leq V$, then $A$ is adjacent to $(q-1)^2$ lines outside $V$ and to $q-1$ lines in $V$, giving $q(q-1)$ adjacent lines, and to $2q-1$ planes, so $\deg(A)=q^2+q-1$.
\end{proof}

\smallskip

\textbf{Case 3. $\dim L'=2$ - Solvable non-nilpotent algebras.}
Here, $L'$ is a 2-dimensional abelian ideal of $L$. In the notation of the introduction of this subsection, the structure of $\Gamma(L)$ depends on the Jordan type of the operator $T = \ad x |_V$ over $\mathbb{F}_q$, which determines the number and type of two-dimensional subalgebras of $L$. We distinguish four subcases according to the canonical form of $T$ over $\mathbb{F}_q$.

\begin{lemma} \label{2dim-subalgebras}
Let $L=Fx+V$ where $V=Fv_1+Fv_2$ and $[x,v_1]=\alpha v_1+\beta v_2$, $[x,v_2]=\gamma v_1+\delta v_2$ where $A=\left(\begin{matrix} \alpha&\beta\\\gamma&\delta \end{matrix}\right)$ is a non-singular matrix. Then
\begin{itemize}
\item[(i)] If $\ad x$ has no eigenvector in $V$, the only two-dimensional subalgebra of $L$ is $V$; 
\item[(ii)] if $\ad x$ has an eigenvector in $V$, the two-dimensional subalgebras of $L$ are $V$ or of the form $Fx+Fv$ where $v$ is an eigenvector, and $Fv+F(x+v_3)$ where $v$ is an eigenvector and $v_3$ is linearly independent of $v$.
\end{itemize}
\end{lemma}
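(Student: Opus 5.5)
Let $U$ be a two-dimensional subalgebra of $L$. I would split according to whether $U=V$ or $U\neq V$, and in the second case study $U\cap V$. Since $V$ is an abelian ideal, $V$ itself is always a subalgebra. If $U\neq V$, then $U\not\subseteq V$, so $\dim(U\cap V)=1$. Write $U\cap V=Fv$. Then $U$ contains an element of the form $x+w$ with $w\in V$. Since $w$ is determined only modulo $Fv$, we may write $U=Fv+F(x+w)$.

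\emph{First key step.} Because $V$ is abelian, $[x+w,v]=[x,v]=(\ad x)v$. Closure of $U$ under the bracket forces $(\ad x)v\in U\cap V=Fv$, so $v$ is an eigenvector of $\ad x|_V$. The eigenvalue is nonzero, since $A$ is non-singular. This already proves (i): if $\ad x$ has no eigenvector in $V$, no such $U$ exists, and $V$ is the only two-dimensional subalgebra.

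\emph{Second key step (the converse, giving (ii)).} Let $v$ be an eigenvector, and set $U=Fv+F(x+w)$ for any $w\in V$. Then $[x+w,v]=\lambda v\in U$, so $U$ is closed under the bracket and is a subalgebra. For the description I would split on $w$. If $w\in Fv$, then $U=Fx+Fv$. Otherwise write $w=v_3$ with $v_3$ linearly independent of $v$, giving $Fv+F(x+v_3)$. Together with $V$, this is exactly the list in (ii), and every member of the list has been shown to be a subalgebra.

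\emph{Main obstacle.} There is no real obstacle; the argument is essentially bookkeeping. The one place needing care is the normalization of the element of $U$ outside $V$. I must justify that $U\not\subseteq V$ yields an element with $x$-coefficient $1$, which follows by scaling. I must also note that changing $w$ by a multiple of $v$ does not change $U$. Finally, I should remark that the invertibility of $A$ is used only to ensure $\lambda\neq 0$, so that these subalgebras are non-abelian. This fact is not strictly needed for the classification itself.
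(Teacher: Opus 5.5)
Your proof is correct and follows essentially the same route as the paper. Both arguments locate a vector $v$ spanning $U\cap V$ and use $[x+w,v]=[x,v]\in U\cap V$ to force $v$ to be an eigenvector. The differences are minor: you split on $U=V$ versus $U\neq V$, getting $\dim(U\cap V)=1$ by a dimension count, rather than on whether $x\in U$; you state explicitly that $V$ is abelian, which the paper's step $[x,v]=\theta v+\phi(x+\nu v_3)$ uses without saying so; and you also check that every listed subspace really is a subalgebra, which the paper leaves unstated.
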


\begin{proof} 
Let $U$ be a two-dimensional subalgebra of $L$. If $x\in U$, then $U=Fx+Fv$ where $v$ is an eigenvector of $\ad x$. So suppose that $x\notin U$. Then $L=Fx+U$ and $U=F(\lambda x+v_1)+F(\mu x+v_2)$. If $\lambda=\mu=0$, then $U=V$, so suppose this is not the case. Then $v=\mu v_1-\lambda v_2\in U$. Let $v_3$ be linearly independent of $v$, so that $x+\nu v_3\in U$. Now $[x,v]=\theta v +\phi(x+\nu v_3)$, so $\phi=0$ and $v$ is an eigenvector of $\ad x$ in $V$.
\end{proof}

\smallskip

So there are four cases for algebras in this case.
\smallskip

\noindent {\bf Case 1: The characteristic equation of $A$ is irreducible.} Then the only two-dimensional subalgebra is $V$.  
\smallskip

\noindent {\bf Case 2: There are two different eigenvalues $\lambda$ and $\mu$.} Then, replacing $x$ by $\lambda^{-1}x$, we have a basis such that $[x,v_1]=v_1$, $[x,v_2]=\mu v_2$. The two-dimensional subalgebras are $V$, $Fv_1+F(x+\alpha v_2)$ and $Fv_2+F(x+\alpha v_1)$.  

\smallskip

\noindent {\bf Case 3: There is only one eigenvalue $\lambda$ and $A$ is not diagonable.} Then there is a basis such that $[x,v_1]=\lambda v_1$, $[x,v_2]=v_1+\lambda v_2$. The two-dimensional subalgebras are $V$, and $Fv_1+F(x+\alpha v_2)$.  
\smallskip

\noindent {\bf Case 4: $A$ is a scalar matrix, $\lambda I_2$.}  Then, replacing $x$ by $\lambda^{-1} x$, there is a basis such that $[x,v_1]=v_1$, $[x,v_2]=v_2$.

\begin{proposition}\label{dim3-derived2}
Let $L$ be a three-dimensional Lie algebra over the finite field $\F_q$ such that $\dim L' = 2$. As before, let $\mathcal{L}$ denote the set of one-dimensional Lie subalgebras of $L$, and let $\mathcal{P}$ denote the set of two-dimensional Lie subalgebras of $L$. Then 
\begin{enumerate}
\item $\mathcal{L}$ has $q^2 + q + 1$ elements.
\item The number of two-dimensional Lie subalgebras is determined by the canonical form of $\ad x$ over $\F_q$ as follows:
\[|\mathcal{P}| = \begin{cases}
    1 &  \text{in case 1}\\
    1+2q &  \text{in case 2}\\
    1+q &  \text{in case 3}\\
    1+q+q^2 &  \text{in case 4}.
\end{cases}\]

\item The following rules hold in all four cases:
\begin{enumerate}
    \item If $A, B\in \mathcal{P}$ with $A\neq B$, then $A\sim B$. Hence, the induced subgraph on $\mathcal{P}$ is a complete graph.
    \item If $A \in \mathcal{L}$ and $P \in \mathcal{P}$, then $A \sim P$ if and only if $A \not\subseteq P$.
    \item If $A, B \in \mathcal{L}$ are distinct, then $A \not\sim B$ if and only if $A$ and $B$ are contained in a common plane of $\mathcal{P}$.
\end{enumerate}
    In particular:
\begin{enumerate}[(i)]
    \item In Case 1, $\Gamma(L)[\mathcal{L}]$ is the join of an independent 
    set on the $q+1$ lines of $V$ with a complete graph $K_{q^2}$ on the 
    lines outside $V$.
    \item In Case 3, the unique eigenline $\langle v_1\rangle$ is an isolated vertex of $\Gamma(L)$, since it is contained in every plane of $\mathcal{P}$.
    \item In Case 4, no two lines are adjacent, since every pair of lines 
    spans a plane in $\mathcal{P}$.
\end{enumerate}
\end{enumerate}
\end{proposition}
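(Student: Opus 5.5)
Part (1) is Remark \ref{lines-count}. For (2), I would count the planes listed in Lemma \ref{2dim-subalgebras} case by case, using the normal forms recorded just before the statement.

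In Case 1 there is no eigenvector, so $\mathcal{P}=\{V\}$. In Case 2 the planes are $V$, the $q$ planes $Fv_1+F(x+\alpha v_2)$ and the $q$ planes $Fv_2+F(x+\alpha v_1)$, $\alpha\in\F_q$. These are pairwise distinct: a plane not equal to $V$ meets $V$ in exactly one line, namely the eigenline it contains, and it meets the affine line $x+V$ in one point, which fixes $\alpha$. Note that $Fx+Fv_i$ is the case $\alpha=0$, so it is already counted; this gives $1+2q$. Case 3 is the same argument with the single eigenline $\langle v_1\rangle$, giving $1+q$. In Case 4 every vector of $V$ is an eigenvector. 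A plane $U\neq V$ meets $V$ in a line $\langle v\rangle$ and meets $x+V$ in the coset $x+w+\langle v\rangle$. Such planes are parametrized by a line of $V$ ($q+1$ choices) together with a coset of it in $V$ ($q$ choices), giving $q(q+1)$ planes, plus $V$. Equivalently, every plane is a subalgebra, since $[x+w,v]=v$. The total is $1+q+q^2$.

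For (3), parts (a) and (b) are exactly Lemma \ref{g-adjacency}. The substantive claim is (c). Let $A=\langle a\rangle$ and $B=\langle b\rangle$ be distinct lines. If both lie in some $P\in\mathcal{P}$, then $\langle A,B\rangle\subseteq P$, so they are not adjacent. For the converse, suppose they are non-adjacent, so $S=\langle a,b\rangle$ is proper. Since $S\supseteq A+B$, which is $2$-dimensional, $S$ is itself a two-dimensional subalgebra, namely $A+B$, and hence lies in $\mathcal{P}$. So (c) is really a restatement: two lines are adjacent exactly when $A+B$ is not a subalgebra. The main point to check is that this argument is dimension-only and needs no Lie structure beyond $\dim L=3$, which it does.

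For the particular statements, I would use (c) together with the lists from (2).

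(i) In Case 1 the only plane is $V$. Two lines are non-adjacent exactly when both lie in $V$. The $q+1$ lines of $V$ are therefore pairwise non-adjacent, and every other pair is adjacent. This gives the join of an independent set on $q+1$ vertices with $K_{q^2}$, since there are $q^2+q+1-(q+1)=q^2$ lines outside $V$.

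(ii) In Case 3 every plane in the list contains $v_1$. Hence $\langle v_1\rangle$ is not adjacent to any plane by (b), and not adjacent to any line by (c). Alternatively, one checks that it lies in every maximal subalgebra and applies Lemma \ref{isolated points}.

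(iii) In Case 4 every plane is a subalgebra, so $A+B\in\mathcal{P}$ for any two lines, and no two lines are adjacent.

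I expect the only real care point to be the distinctness and the double-counting in Cases 2 and 4 of the plane count.
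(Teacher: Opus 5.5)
Your proposal is correct and follows the paper's proof. Parts (1) and (3)(a),(b) come from Remark \ref{lines-count} and Lemma \ref{g-adjacency}. Part (3)(c) is the same observation that a proper subalgebra containing the plane $A+B$ must equal it. Part (2) is read off from Lemma \ref{2dim-subalgebras}, and you spell out the counting and distinctness that the paper simply calls immediate.

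Two small points of wording need fixing. First, $x+V$ is an affine plane, not an affine line. So in Case 2 the plane $Fv_1+F(x+\alpha v_2)$ meets it in the coset $x+\alpha v_2+Fv_1$; equivalently, it meets the affine line $x+Fv_2$ in the single point $x+\alpha v_2$. Second, in (ii), every plane containing $v_1$ is not by itself enough to show that every other line shares a plane of $\mathcal{P}$ with $\langle v_1\rangle$. The reason is that the $q+1$ planes of $\mathcal{P}$ are all the planes through $\langle v_1\rangle$, or equivalently that $\langle v_1\rangle$ is an ideal.
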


\begin{proof}
(1) Since every one-dimensional subspace of $L$ is a Lie subalgebra, it follows that $|\mathcal{L}| = q^2+q+1$.

\noindent (2) It follows immediately from Lemma \ref{2dim-subalgebras}. 

\noindent (3) Independently of the case, we have:  
\begin{enumerate}[(a)]
\item By Lemma \ref{g-adjacency} (1), the induced subgraph on $\mathcal{P}$ is complete.
    
\item This follows immediately from Lemma~\ref{g-adjacency} (2).

\item Finally, let $A,B\in\mathcal L$ be distinct. If $A$ and $B$ are contained in a common plane $P\in\mathcal P$, then $\langle A,B\rangle\subseteq P\neq L$, so $A\not\sim B$.

Conversely, suppose that $A\not\sim B$. Then $\langle A, B\rangle\neq L$. Since $A\neq B$, the space $A+B$ is two-dimensional, and hence $\langle A, B\rangle$ is a two-dimensional subalgebra of $L$. Therefore, $\langle A, B\rangle \in\mathcal P$, and both $A$ and $B$ are contained in
this plane. 
\end{enumerate}
The statements (i)-(iii) follow immediately.
\smallskip
\end{proof}

\begin{corollary}
Let $L$ be a Lie algebra as in Lemma \ref {2dim-subalgebras}. Then the degrees of the vertices of $\Gamma(L)$ are as follows.
\smallskip

\noindent\textbf{Case 1.} $|\mathcal{P}|=1$.
\begin{enumerate}[(a)]
    \item $\deg(V) = q^2$.
    \item $\deg(A) = q^2$ for every line $A \leq V$.
    \item $\deg(B) = q^2+q+1$ for every line $B \not\leq V$.
\end{enumerate}
In particular, every line outside $V$ is adjacent to every other vertex of $\Gamma(L)$.

\smallskip
\noindent\textbf{Case 2.} $|\mathcal{P}|=1+2q$. Let $E_1, E_2$ be the two $T$-eigenlines in $V$.
\begin{enumerate}[(a)]
    \item $\deg(P) = q^2+2q$ for every $P \in \mathcal{P}$.
    \item $\deg(E_1) = \deg(E_2) = q$.
    \item $\deg(A) = q^2+2q$ for every non-eigenline $A \leq V$.
    \item $\deg(B) = q^2+q-1$ for every line $B \not\leq V$.
\end{enumerate}
In particular, planes and non-eigenlines in $V$ have the same degree 
$q(q+2)$.

\smallskip
\noindent\textbf{Case 3.} $|\mathcal{P}|=1+q$. Let $E = \langle v_1\rangle$ be the unique $T$-eigenline in $V$.
\begin{enumerate}[(a)]
    \item $\deg(E) = 0$; that is, $E$ is an isolated vertex of $\Gamma(L)$.
    \item For every $P \in \mathcal{P}$ and every line $A \in \mathcal{L}$: $\deg(P) = \deg(A) =q^2+q$.
\end{enumerate}
In particular, $\Gamma(L)^*$ is a $(q^2+q)$-regular graph.

\smallskip
\noindent\textbf{Case 4.} $|\mathcal{P}|=q^2+q+1$.
\begin{enumerate}[(a)]
    \item $\deg(P) = 2q^2+q$ for every $P \in \mathcal{P}$.
    \item $\deg(A) = q^2$ for every line $A \in \mathcal{L}$.
\end{enumerate}
\end{corollary}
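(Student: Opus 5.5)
The plan is to read off every degree from the three adjacency rules (a)--(c) of Proposition~\ref{dim3-derived2}, using the explicit list of planes from Lemma~\ref{2dim-subalgebras} and the case descriptions that follow it. Each plane has exactly $q+1$ lines, and $|\mathcal{L}|=q^2+q+1$. So every $P\in\mathcal{P}$ is adjacent to the other $|\mathcal{P}|-1$ planes and to the $q^2$ lines outside it. For a line $A$, let $\mathcal{P}_A$ be the set of planes of $\mathcal{P}$ containing $A$. Then
\[\deg(A)=\bigl(|\mathcal{P}|-|\mathcal{P}_A|\bigr)+\bigl(q^2+q-N_A\bigr),\]
where $N_A$ is the number of lines $\neq A$ lying in the union of the planes in $\mathcal{P}_A$. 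Two distinct planes through $A$ meet exactly in $A$, so $N_A=q\,|\mathcal{P}_A|$. Hence the whole task is to determine $|\mathcal{P}_A|$ for each type of line and substitute.

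\textbf{Case 1.} Here $\mathcal{P}=\{V\}$. This gives $\deg V=q^2$. A line $A\le V$ has $|\mathcal{P}_A|=1$, so $\deg A=0+(q^2+q-q)=q^2$. A line $B\not\le V$ has $\mathcal{P}_B=\emptyset$, so $\deg B=1+q^2+q$.

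\textbf{Case 2.} The planes are $V$, $Fv_1+F(x+\alpha v_2)$ and $Fv_2+F(x+\alpha v_1)$, so $\deg P=2q+q^2$. For $E_1=\langle v_1\rangle$ the planes through it are $V$ and the $q$ planes $Fv_1+F(x+\alpha v_2)$. Thus $|\mathcal{P}_{E_1}|=q+1$, which accounts for every line, and $\deg E_1=(2q+1-(q+1))+0=q$; the same holds for $E_2$. A non-eigenline $A\le V$ lies only in $V$, giving $q^2+2q$. A line $B=\langle x+av_1+bv_2\rangle$ lies exactly in $Fv_1+F(x+bv_2)$ and $Fv_2+F(x+av_1)$. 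So $|\mathcal{P}_B|=2$ and $\deg B=(2q-1)+(q^2-q)=q^2+q-1$.

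\textbf{Case 3.} The planes are $V$ and the $q$ planes $Fv_1+F(x+\alpha v_2)$, all containing $E$. So $\mathcal{P}_E=\mathcal{P}$ covers $1+q\cdot(q+1)$ lines, i.e.\ all lines, and $\deg E=0$. Every other line lies in exactly one plane. A line of $V$ lies only in $V$, and $\langle x+av_1+bv_2\rangle$ lies only in $Fv_1+F(x+bv_2)$. Hence $\deg A=q+(q^2+q-q)=q^2+q$, and likewise $\deg P=q+q^2$.

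\textbf{Case 4.} Every subspace is a subalgebra, so $|\mathcal{P}|=q^2+q+1$ and each line lies in $q+1$ planes, which cover all lines. This gives $\deg P=2q^2+q$ and $\deg A=q^2$, exactly as in the abelian case.

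The main obstacle is verifying the exact sets $\mathcal{P}_A$, in particular that a line outside $V$ lies in precisely the stated planes (two in Case 2, one in Case 3). I would handle this by normalizing the generator to have $x$-coefficient $1$ and matching coordinates against the parametrized planes of Lemma~\ref{2dim-subalgebras}.
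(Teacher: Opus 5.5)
Your proposal is correct and is essentially the paper's own proof. The paper likewise reads every degree off Proposition~\ref{dim3-derived2}(3)(b),(c) by counting, for each line, the planes of $\mathcal{P}$ containing it and the $q$ further lines in each such plane, using the same plane lists from Lemma~\ref{2dim-subalgebras}. Your uniform formula $\deg(A)=(|\mathcal{P}|-|\mathcal{P}_A|)+q\,(q+1-|\mathcal{P}_A|)$ just packages the paper's case-by-case counts, and your membership checks (two planes for a line outside $V$ in Case 2, one plane in Case 3) are exactly the ones the paper relies on.
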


\begin{proof}
We use Proposition \ref{dim3-derived2} throughout. Since every two-dimensional subspace contains exactly $q+1$ lines, and there are $q^2+q+1$ lines, each plane $P \in \mathcal{P}$ is adjacent to exactly $q^2$ lines. We use this freely below.

\smallskip
\noindent\textbf{Case 1.}
(a) $\mathcal P=\{V\}$, so $V$ is adjacent only to the $q^2$ lines not contained in it. Hence $\deg(V) = q^2$.

(b) Let $A$ be a line contained in $V$. By Proposition \ref{dim3-derived2} (3)(c), since the only plane is $V$ and $A \leq V$, the only lines not adjacent to $A$ are those sharing the plane $V$ with $A$, namely the other $q$ lines in $V$. Hence $A$ is adjacent to $q^2$ lines outside $V$, giving $\deg(A) = q^2$.

(c) Now let $B$ be a line with $B\nleq V$. Since the only plane in $\mathcal P$ is $V$, no line outside $V$ is contained in a common plane with $B$. Thus $B$ is adjacent to every other line, namely to the $q+1$ lines of $V$ and to the remaining $q^2-1$ lines outside $V$. Also, $B\sim V$ by Proposition \ref{dim3-derived2} (3)(b). Hence $\deg(B)=(q+1)+(q^2-1)+1=q^2+q+1$. 

\smallskip

\noindent\textbf{Case 2.} (a) Here $|\mathcal P|=1+2q$. Let $P\in\mathcal P$. Since $\Gamma(L)[\mathcal P]$ is complete, $P$ is adjacent to the other $2q$ planes. Also, every plane contains exactly $q+1$ lines, so $P$ is adjacent to the remaining $(q^2+q+1)-(q+1)$ lines. Hence $\deg(P)=q^2+2q$.

(b) The eigenline $E_1= Fv_1$ is contained in $V$ and also in each of the $q$ planes of the form $Fv_1+F(x+\alpha v_2)$. Hence $E_1$ is contained in exactly $q+1$ planes. Moreover, every line distinct from $E_1$ is contained in $E_1$ in one of these planes, so by Proposition \ref{dim3-derived2} (3)(c) the line $E_1$ is not adjacent to any other line. Therefore, $E_1$ is adjacent only to the planes not containing it, of which there are $(1+2q)-(q+1)$. Thus, $\deg(E_1)=q$. The same argument applies to $E_2$.

(c) Now let $A\leq V$ be a non-eigenline. Then $A$ is contained only in the plane $V$, so by Proposition \ref{dim3-derived2} (3)(c) it is non-adjacent precisely to the other $q$ lines of $V$, and adjacent to all $q^2$ lines outside $V$. Also, by
Proposition \ref{dim3-derived2} (3)(b), it is adjacent to every plane except $V$, that is, to $2q$ planes. Hence, $\deg(A) = 2q+q^2 = q(q+2)$.

(d) Finally, let $A\nleq V$. Then $A$ is contained in exactly two planes of $\mathcal P$, one of the form $Fv_1+F(x+\alpha v_2)$ and one of the form $Fv_2+F(x+\beta v_1)$. Each of these planes contributes exactly $q$ lines
distinct from $A$, and since the two planes intersect exactly in $A$, these $2q$ lines are all distinct. Therefore, $A$ is non-adjacent to exactly $2q$ lines, and hence adjacent to $(q^2+q)-2q$ lines. It is also adjacent to all planes except the two containing it, that is, to $(1+2q)-2$ planes. Thus, $\deg(A)= (q^2+q)-2q + (1+2q)-2 = q^2+q-1$.
 
\begin{figure}[H]
\centering
\begin{minipage}{0.45\textwidth}
\centering
\begin{tikzpicture}[
  scale=0.85,
  linev/.style={circle,draw=black,fill=yellow, minimum size=5pt,inner sep=0pt},
  planev/.style={circle,draw=black,fill=blue, minimum size=5pt,inner sep=0pt}
]
\def\r{2.1}
\node[linev] (L1) at (90:\r)  {};
\node[font=\small] at (90:{\r+.4}) {$L_1$};
\node[linev] (L2) at (130:\r) {};
\node[font=\small] at (130:{\r+.5}) {$L_2$};
\node[linev] (L3) at (170:\r) {};
\node[font=\small] at (170:{\r+.5}) {$L_3$};


\node[planev] (Vp) at (225:\r) {};
\node[font=\small] at (225:{\r+.4}) {$V$};
\node[linev] (L4) at (285:\r) {};
\node[font=\small] at (285:{\r+.4}) {$L_4$};
\node[linev] (L5) at (325:\r) {};
\node[font=\small] at (325:{\r+.4}) {$L_5$};
\node[linev] (L6) at (5:\r)   {};
\node[font=\small] at (5:{\r+.4}) {$L_6$};
\node[linev] (L7) at (45:\r)  {};
\node[font=\small] at (45:{\r+.4}) {$L_7$};
\draw[thick] (L4)--(L5) (L4)--(L6)  (L4)--(L7) (L5)--(L6)  (L5)--(L7) (L6)--(L7);
\foreach \i in {1,2,3}{
  \foreach \j in {4,5,6,7}{\draw[thick] (L\i)--(L\j);}
}
\foreach \j in {4,5,6,7}{\draw[thick] (Vp)--(L\j);}
\end{tikzpicture}
\end{minipage}
\hfilneg
\begin{minipage}{0.50\textwidth}
\centering
\begin{tikzpicture}[
  scale=0.85,
  linev/.style={circle,draw=black,fill=yellow, minimum size=5pt,inner sep=0pt},
  planev/.style={circle,draw=black,fill=blue, minimum size=5pt,inner sep=0pt}
]
\def\r{2.1} 
\node[planev] (V)  at (90:\r)  {};
\node[font=\small] at (90:{\r+.4}) {$V$};
\node[planev] (Pa) at (150:\r) {};
\node[font=\small]  at (150:{\r+.5}) {$P_1$};
\node[planev] (Pb) at (210:\r) {};
\node[font=\small] at (210:{\r+.4}) {$P_2$};
\node[planev] (Pc) at (300:\r) {};
\node[font=\small] at (300:{\r+.4}) {$P_3$};
\node[planev] (Pd) at (0:\r)  {};
\node[font=\small]  at (0:{\r+.4}) {$P_4$};
\node[linev] (E1) at (60:\r)  {};
\node[font=\small] at (60:{\r+.4}) {$L_5$};
\node[linev] (La) at (120:\r) {};
\node[font=\small]  at (120:{\r+.5}) {$L_1$};
\node[linev] (Wn) at (180:\r) {};
\node[font=\small]  at (180:{\r+.4}) {$L_7$};
\node[linev] (Lb) at (240:\r) {};
\node[font=\small] at (240:{\r+.4}) {$L_2$};
\node[linev] (E2) at (270:\r) {};
\node[font=\small] at (270:{\r+.4}) {$L_6$};
\node[linev] (Lc) at (330:\r) {};
\node[font=\small]  at (330:{\r+.4}) {$L_3$};
\node[linev] (Ld) at (30:\r)  {};
\node[font=\small]  at (30:{\r+.4}) {$L_4$};
\draw[thick] (V)--(Pa)  (V)--(Pb)  (V)--(Pc)  (V)--(Pd);
\draw[thick] (Pa)--(Pb)  (Pa)--(Pc) (Pa)--(Pd) (Pb)--(Pc)  (Pb)--(Pd) (Pc)--(Pd);
\draw[thick] (E1)--(Pc)  (E1)--(Pd);
\draw[thick] (E2)--(Pa)  (E2)--(Pb);
\draw[thick] (Wn)--(Pa)  (Wn)--(Pb)  (Wn)--(Pc)  (Wn)--(Pd) (Wn)--(La)  (Wn)--(Lb) (Wn)--(Lc) (Wn)--(Ld);
\draw[thick] (La)--(V)  (La)--(Pb)  (La)--(Pd) (La)--(Ld);
\draw[thick] (Lb)--(V)  (Lb)--(Pb)  (Lb)--(Pc)  (Lb)--(Lc);
\draw[thick] (Lc)--(V)  (Lc)--(Pa)  (Lc)--(Pd);
\draw[thick] (Ld)--(V)  (Ld)--(Pa)  (Ld)--(Pc);
\end{tikzpicture}
\end{minipage}
\caption{The comaximal graphs for $q=2$ in cases 1 and 2}
\end{figure}

\noindent\textbf{Case 3.}
(a)  Here $|\mathcal P|=1+q$, and the eigenline $E = \langle v_1\rangle$ is contained in every plane, so it is not adjacent to any plane. Moreover, every other line is contained with $E$ in a common plane of $\mathcal P$, so by Proposition \ref{dim3-derived2} (3) (c) it is adjacent to no line. Hence, $\deg(E) = 0$.
\smallskip

(b) Every plane in $\mathcal P$ contains the unique eigenline $\langle v_1\rangle$. If $P\in\mathcal P$, then $P$ is adjacent to the other $q$ planes and to all lines not contained in it. Therefore,
$\deg(P)=q+q^2=q^2+q$.
\smallskip

Now let $A \in \mathcal{L}$. If $\langle v_1\rangle \neq A\leq V$, then $A$ is contained only in the plane $V$. Hence it is non-adjacent to the other $q$ lines of $V$, and adjacent to all $q^2$ lines outside $V$. Also, it is adjacent to all planes except $V$, that is, to $q$ planes. Therefore, $\deg(A)=q^2+q$.
\smallskip

If $A\nleq V$, then $A$ is contained in a unique plane of $\mathcal P$, namely the plane generated by $A$ and $\langle v_1\rangle$. That plane contains exactly $q$ lines distinct from $A$, all of which are therefore non-adjacent to $A$. Thus, $A$ is adjacent to $(q^2+q)-q$ lines. It is also adjacent to all planes except the unique one containing it, that is, to $(q+1)-1$ planes. Hence again $\deg(A)=(q^2+q)-q + (q+1)-1 =q^2+q$.

\smallskip
\noindent\textbf{Case 4.}
(a) Here $|\mathcal P|=q^2+q+1$, and every two-dimensional subspace of $L$ belongs to $\mathcal P$. Each plane $P$ is adjacent to the other $q^2+q$ planes, and to all lines not contained in $P$, that is, to $q^2$ lines, so $\deg(P) = q^2+q+q^2 = 2q^2+q$.

(b) Every pair of distinct lines spans a two-dimensional subspace, which is a plane in $\mathcal{P}$. Hence, no two lines are adjacent. Further, every line $A$ is contained in exactly $q+1$ planes, so it is adjacent to $q^2$ planes, 
giving $\deg(A) = q^2$.
\end{proof}

\begin{figure}[H]
\centering
\begin{minipage}{0.45\textwidth}
\centering
\begin{tikzpicture}[
  scale=0.85,
  linev/.style={circle,draw=black,fill=yellow, minimum size=5pt,inner sep=0pt},
  planev/.style={circle,draw=black,fill=blue, minimum size=5pt,inner sep=0pt}
]
\def\r{2.1}
\node[planev] (V)  at (0:\r)   {};
\node[font=\small]      at (0:{\r+.4})   {$V$};
\node[linev]  (A1) at (40:\r)  {};
\node[font=\small]      at (40:{\r+.4})  {$L_1$};
\node[linev]  (A2) at (80:\r)  {};
\node[font=\small]      at (80:{\r+.4})  {$L_2$};
\node[planev] (P2) at (120:\r) {};
\node[font=\small]at (120:{\r+.45}) {$P_2$};
\node[linev]  (B1) at (160:\r) {};
\node[font=\small]   at (160:{\r+.45}) {$L_3$};
\node[linev]  (B2) at (200:\r) {};
\node[font=\small]      at (200:{\r+.4}) {$L_4$};
\node[planev] (P3) at (240:\r) {};
\node[font=\small]at (240:{\r+.4}) {$P_3$};
\node[linev]  (B3) at (280:\r) {};
\node[font=\small]     at (280:{\r+.45}) {$L_5$};
\node[linev]  (B4) at (320:\r) {};
\node[font=\small]at (320:{\r+.4}) {$L_6$};
\node[linev, black]  (E)  at (0,0) {};
\node[font=\tiny,anchor=west] at (-0.051,0) {$E$};
\draw[thick] (V)--(P2)  (V)--(P3) (P2)--(P3);
\draw[thick] (A1)--(B1) (A1)--(B2) (A1)--(B3) (A1)--(B4) (A1)--(P2) (A1)--(P3);
\draw[thick] (A2)--(B1)  (A2)--(B2) (A2)--(B3) (A2)--(B4) (A2)--(P2) (A2)--(P3);
\draw[thick] (B1)--(B3) (B1)--(B4) (B1)--(V) (B1)--(P3);
\draw[thick] (B2)--(B3) (B2)--(B4) (B2)--(V) (B2)--(P3);
\draw[thick] (B3)--(V) (B3)--(P2) (B4)--(V)  (B4)--(P2);
\end{tikzpicture}
\end{minipage} 
\hfil
\begin{minipage}{0.45\textwidth}
\centering

\begin{tikzpicture}[
scale=0.85,
transform shape,
linev/.style={circle, draw=black, fill=yellow, minimum size=7pt, inner sep=0pt},
planev/.style={circle, draw=black, fill=blue, minimum size=7pt, inner sep=0pt}
]

\def\r{2.2}
\def\labeldist{0.5}

\foreach \i/\a in {
1/0,
2/25.714,
3/51.428,
4/77.142,
5/102.857,
6/128.571,
7/154.285}
{
\node[linev] (L\i) at (\a:\r) {};
\node at (\a:{\r+\labeldist}) {$L_{\i}$};
}

\foreach \i/\a in {
1/180,
2/205.714,
3/231.428,
4/257.142,
5/282.857,
6/308.571,
7/334.285}
{
\node[planev] (P\i) at (\a:\r) {};
\node at (\a:{\r+\labeldist}) {$P_{\i}$};
}

\foreach \i in {1,...,7}
{
\foreach \j in {1,...,7}
{
\ifnum\i<\j
\draw[thick] (P\i)--(P\j);
\fi
}
}

\foreach \i in {1,...,7}
{
\foreach \j in {1,...,7}
{
\draw[thick] (L\i)--(P\j);
}
}
\end{tikzpicture}
\end{minipage}
\caption{The comaximal graphs for $q=2$ in cases 3 and 4}
\end{figure}

\textbf{Case 4. $\dim L'=3$ - the perfect case.} We would expect that the following result is well known, but we were unable to find a reference for it.

\begin{theorem} 
Let $L$ be a perfect three-dimensional Lie algebra over a finite field $\F_q$ where $q\neq 2$. Then $L\cong \mathfrak{sl}_2(\F_q)$ (the split case) or $L\cong \mathfrak{su}_2(\F_q)$. In the latter case, $L$ has a basis $e_1,e_2,e_3$ with $[e_1,e_2]=e_3$, $[e_2,e_3]=e_1$ and $[e_3,e_1]=e_2$ (the non-split case).
\end{theorem}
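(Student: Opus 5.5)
We will classify perfect three-dimensional Lie algebras over $\F_q$ ($q\neq 2$) by first showing that such an $L$ is simple. We then show that it carries a nondegenerate symmetric bilinear form which determines $L$ up to isomorphism, and finally use the classification of ternary quadratic forms over finite fields.

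\emph{Step 1 (simplicity).} If $I$ is a proper nonzero ideal, then $L/I$ has dimension at most $2$, so it is solvable. Also $\dim I\le 2$, so $I$ is solvable, and hence $L$ is solvable, contradicting $L=L'$. Thus $L$ is simple.

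\emph{Step 2 (the form).} Over a field of characteristic not $2$, every three-dimensional Lie algebra can be described by a $3\times 3$ matrix $M$ via $[e_i,e_j]=\sum_k \epsilon_{ijk}$-type formulas: identify $\Lambda^2 L\cong L^*\otimes \det$ and write the bracket as a linear map $\Lambda^2L\to L$. The Jacobi identity forces this map, viewed as a bilinear form on $L^*$, to be of the form $S+$(skew part), and perfectness forces the skew part to vanish and $S$ to be nondegenerate. Concretely, I would verify by hand that one may choose a basis with $[e_2,e_3]=a_1e_1$, $[e_3,e_1]=a_2e_2$, $[e_1,e_2]=a_3e_3$ and $a_1a_2a_3\neq0$. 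The route is to diagonalize the symmetric matrix $S$, which uses characteristic $\neq 2$. For $q$ odd this is standard. For $q=2^k$ with $k\ge2$ the diagonalization argument fails, and I would instead treat this case by the Killing/trace form approach, or by noting that the bracket gives a cross-product-like structure. This characteristic-$2$ case is the main obstacle, and the hypothesis $q\neq2$ suggests the authors handle $q$ even separately; I would first try to directly exhibit an $\mathfrak{sl}_2$-triple using a nonzero $\ad$-nilpotent element.

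\emph{Step 3 (reduction).} Rescaling $e_i\mapsto \lambda_ie_i$ changes $(a_1,a_2,a_3)$ to $(a_1\lambda_2\lambda_3/\lambda_1,\dots)$, so the isomorphism class depends only on the similarity class of the ternary form $a_1X^2+a_2Y^2+a_3Z^2$. Over $\F_q$ with $q$ odd, nondegenerate ternary forms are classified by discriminant modulo squares. Scaling the form by a nonsquare toggles the discriminant class in odd dimension, so there is exactly one similarity class, and hence all perfect $L$ are isomorphic. This means $\mathfrak{sl}_2(\F_q)\cong\mathfrak{su}_2(\F_q)$ in odd characteristic, consistent with the statement's ``or''. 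Concretely, $-1$ is a sum of two squares in $\F_q$, which gives an explicit isomorphism sending $a_1=a_2=a_3=1$ to the $\mathfrak{sl}_2$ constants. I would record both descriptions, exhibiting $\mathfrak{sl}_2$ via an isotropic vector of the form: an isotropic vector $x$ yields a nilpotent $\ad x$, which can be completed to a basis $x,y,h$ with the standard brackets $[h,x]=2x$, $[h,y]=-2y$, $[x,y]=h$.
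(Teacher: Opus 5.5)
For odd $q$ your argument is correct, and it takes a genuinely different route from the paper. The paper quotes a normal form $[e_1,e_2]=e_3$, $[e_2,e_3]=\alpha e_1$, $[e_3,e_1]=\beta e_2$ for three-dimensional simple algebras. It then splits according to whether $\beta$ is a square in $\F_q$, changing basis to reach $\mathfrak{sl}_2(\F_q)$ in one case and rescaling to the $\mathfrak{su}_2$ basis in the other.

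You instead encode $L$ by its structure matrix $M=(m_{ij})$, defined by $[e_2,e_3]=Me_1$, $[e_3,e_1]=Me_2$, $[e_1,e_2]=Me_3$. You show $M$ is symmetric and invertible, and use the change-of-basis law $M\mapsto\det(Q)^{-1}QMQ^{T}$ to reduce to nondegenerate ternary forms up to similarity, of which there is exactly one class over $\F_q$. This buys a stronger conclusion: there is only one perfect three-dimensional algebra. Hence $\mathfrak{su}_2(\F_q)\cong\mathfrak{sl}_2(\F_q)$, and both alternatives of the statement hold at once. Concretely, $ae_1+be_2+e_3$ with $a^2+b^2=-1$ has Killing norm $-2(a^2+b^2+1)=0$, so it is a nonzero $\ad$-nilpotent element of the $\mathfrak{su}_2$ basis. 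Your route also avoids the residue bookkeeping of the paper's case split, whose intermediate claim that $-\beta$ is a non-square whenever $\beta$ is a square holds only for $q\equiv 3\pmod 4$.

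Two details to tighten. First, the reason the skew part vanishes is that $\operatorname{tr}\ad$ kills $L'=L$, while $\operatorname{tr}\ad e_1=m_{23}-m_{32}$ (and cyclically). Second, diagonal rescalings $e_i\mapsto\lambda_ie_i$ do not connect all isometric diagonal forms (e.g.\ $(1,1,1)$ and $(1,c,c)$ with $c$ a non-square). So you need the full law above, replacing $Q$ by $\nu Q$ to adjust the scalar. Once uniqueness is established, the explicit $\mathfrak{sl}_2$-triple is superfluous, since $\mathfrak{sl}_2(\F_q)$ is itself perfect for odd $q$.

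The genuine gap is characteristic $2$. The hypothesis $q\neq 2$ admits $q=2^k$ with $k\ge 2$, where your proposal stops, and two of the fallbacks you list cannot work:
\begin{itemize}
\item In characteristic $2$ one has $[h,x]=2x=0=[h,y]$, so $\mathfrak{sl}_2(\F_{2^k})$ is the Heisenberg algebra and is not perfect. No $\mathfrak{sl}_2$-triple spans a perfect $L$, and the only possible answer is $\mathfrak{su}_2(\F_q)$.
\item The Killing form gives nothing: on the $\mathfrak{su}_2$ basis $\kappa(e_i,e_i)=-2=0$ and $\kappa(e_i,e_j)=0$ for $i\neq j$, so it vanishes identically.
\end{itemize}
The paper's proof does not cover this case either, since it relies on exactly half of the nonzero elements of $\F_q$ being squares.

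Your cross-product idea is the right one, and your own framework delivers it with two changes. First, replace the symmetric-plus-skew decomposition, which is meaningless in characteristic $2$, by the trace argument. Since $\operatorname{tr}\ad$ vanishes on $L'=L$, $M$ is symmetric, and $M$ is invertible because its image is $L'$; this works in every characteristic. Second, an invertible symmetric $3\times 3$ matrix is not alternating, because alternating matrices of odd size are singular. A non-alternating symmetric bilinear form is diagonalizable by congruence even in characteristic $2$. So $L$ has a basis with $[e_2,e_3]=a_1e_1$, $[e_3,e_1]=a_2e_2$, $[e_1,e_2]=a_3e_3$. Since every element of $\F_{2^k}$ is a square, the rescaling $e_i\mapsto(a_ja_k)^{-1/2}e_i$, with $\{i,j,k\}=\{1,2,3\}$, turns $(a_1,a_2,a_3)$ into $(1,1,1)$. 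This gives $L\cong\mathfrak{su}_2(\F_q)$.
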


\begin{proof} 
Let $L$ be perfect. If $L$ is not simple, then it has a proper ideal $A$, and $A$, $L/A$ are both solvable. Hence $L$ is solvable and $L^2\neq L$, a contradiction. Thus, $L$ is simple, and so $L$ has a basis  $e_1, e_2, e_3$ with $[e_1,e_2]=e_3$, $[e_2,e_3]=\alpha e_1$ and $[e_3,e_1]=\beta e_2$, by \cite[Proposition 6.1, page 35]{sf}.
\par

Now, precisely half of the non-zero elements of $\F_q$ are quadratic residues modulo $q$. If $\beta$ is such a residue, then $-\beta$ is a non-residue and $x^2+\beta$ is irreducible; if $\beta$ is a non-residue, then $-\beta$ is a residue and $x^2+\beta$ is reducible. If $\beta$ is a non-residue, then we can make the basis change given in the last paragraph of \cite[page 35]{sf} to see that $L\cong \mathfrak{sl}_2(\F_q)$. If $\beta$ is a residue, then we can argue as in the real case on \cite[page 36]{sf} to see that $\beta^{1/2}$ and $\alpha^{1/2}$ exist, and putting $e_1'=\beta^{-1/2}e_1$, $e_2'=\alpha^{-1/2}e_2$ and $e_3'=(\alpha\beta)^{-1/2}e_3$ gives the basis claimed for $\mathfrak{su}_2(\F_q)$.
\end{proof}

From now on, we assume $q$ is odd. Every proper subalgebra of $\mathfrak{su}_2(\mathbb{F}_q)$ is one-dimensional, and so the graph is complete. The split case is more interesting. Fix the standard basis $\{x,y,h\}$ of $\mathfrak{sl}_2(\mathbb{F}_q)$ with Lie brackets given by $[h,x]=2x$, $[h,y]=-2y$, and $[x,y]=h$ (See Section 2). Assume further that $q$ is odd.

The structure of the subalgebra lattice of $\mathfrak{sl}_2(\F_q)$ is well known; we recall the parts relevant to the comaximal graph.

\begin{proposition}\label{sl2-subalgebras}
Let $L = \mathfrak{sl}_2(\F_q)$ with $q$ odd. Let $\mathcal{L}$ be its set of lines and $\mathcal{B}$ the set of $2$-dimensional Lie subalgebras. Then:
\begin{enumerate}
\item The set $\mathcal{L}$ has $q^2 + q + 1$ elements and every line is of exactly one of the following types:
\begin{enumerate}[(a)]
\item \emph{nilpotent}, generated by a nonzero nilpotent element, that is, a conjugate of $x$;
\item  \emph{split semisimple}, generated by a semisimple element diagonalizable over $\mathbb{F}_q$, that is, a 
        conjugate of $h$;
\item  \emph{nonsplit semisimple}, generated by a semisimple element not diagonalizable over $\mathbb{F}_q$, that is, elements whose characteristic polynomial has no roots in $\mathbb{F}_q$.
\end{enumerate}
Denoting the corresponding subsets by $\mathcal{N}$, $\mathcal{L}_s$, $\mathcal{L}_{ns}$ respectively, then the cardinalities are $|\mathcal{N}| = q+1$, $|\mathcal{L}_s| = \tfrac{q(q+1)}{2}$, and $|\mathcal{L}_{ns}| = \tfrac{q(q-1)}{2}$.
\item The $2$-dimensional Lie subalgebras of $L$ are precisely the Borel subalgebras. Moreover, $|\mathcal{B}| = q + 1$, and $\mathcal{B}$ is naturally in bijection with $\mathbb{P}^1(\mathbb{F}_q)$.
\end{enumerate}
\end{proposition}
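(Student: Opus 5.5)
The plan is to classify lines by the determinant (equivalently the Killing form up to scalar) of a spanning element, and to obtain the planes from Lemma~\ref{2dim-subalgebras}-style reasoning. Write a general element as $u = ax + by + ch$, so that as a matrix $u=\begin{pmatrix} c & a\\ b & -c\end{pmatrix}$ and $\det u = -(c^2+ab)$. The characteristic polynomial of $u$ is $t^2 - (c^2+ab)$. The eigenvalues of $\ad u$ are $0,\pm 2\sqrt{c^2+ab}$, and $q$ is odd, so $2\neq 0$. Set $D(u)=c^2+ab$; scaling $u$ by $\lambda$ multiplies $D(u)$ by $\lambda^2$, so the square class of $D(u)$ is an invariant of the line $\mathbb{F}_q u$.
\begin{itemize}
\item If $D(u)=0$, then $u$ is nilpotent: $u^2=D(u)I=0$, and since $u\neq 0$ it is conjugate to $x$.
\item If $D(u)$ is a nonzero square, then $u$ has distinct eigenvalues in $\mathbb{F}_q$, hence is diagonalizable and conjugate to a multiple of $h$.
\item If $D(u)$ is a nonsquare, the characteristic polynomial has no roots in $\mathbb{F}_q$, so $u$ is nonsplit semisimple (semisimple because it becomes diagonalizable over $\mathbb{F}_{q^2}$).
\end{itemize}
These three cases are mutually exclusive, which gives the trichotomy in (1).

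For the counts, the number of nonzero vectors with $ab+c^2=0$ is the number of nonzero points on a nondegenerate conic in $\mathbb{F}_q^3$, namely $q^2-1$; dividing by $q-1$ gives $q+1$ nilpotent lines. Projectively, these are the $q+1$ points of a conic in $\mathbb{P}^2(\mathbb{F}_q)$. For the remaining lines, I would use the standard fact that every point off a nondegenerate conic in odd characteristic lies on either $2$ tangent lines (an external point) or $0$ tangent lines (an internal point). There are $q(q+1)/2$ external points and $q(q-1)/2$ internal points.

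To identify external points with the split class, I would compute directly instead of relying on geometry. The form $Q(a,b,c)=ab+c^2$ takes each nonzero value $d$ exactly $q^2-q\cdot\eta(-d)$ times, where $\eta$ is the quadratic character; this follows from the classical count $q^2+\eta(-\det)\,q$ for ternary forms, with the sign depending on $d$'s square class. Summing over squares $d$ and dividing by $q-1$ gives the number of split lines. Alternatively, and more cleanly, I would count orbits. The centralizer of $h$ under $\mathrm{PGL}_2(\mathbb{F}_q)$, acting on lines modulo the swap $h\mapsto -h$, has order $2(q-1)$, so the orbit of $\mathbb{F}_q h$ has size $q(q^2-1)/(2(q-1))=q(q+1)/2$. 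Subtracting from $q^2+q+1$ then yields $q(q-1)/2$ nonsplit lines. This avoids quadratic-character sums, since $|\mathrm{PGL}_2(\mathbb{F}_q)|=q(q^2-1)$ and conjugation acts transitively on split lines.

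For (2), first note that every $2$-dimensional subalgebra $P$ is solvable, because two-dimensional Lie algebras are solvable. It is also maximal solvable, since $L$ is not solvable, so $P$ is a Borel subalgebra. Conversely, a Borel subalgebra is a proper solvable subalgebra. It cannot be $1$-dimensional, because $\mathbb{F}_q u$ lies in a plane whenever $u$ has an eigenvector for $\ad u$ outside $\mathbb{F}_q u$. For nonsplit $u$, however, no plane contains $u$: any plane is non-abelian, as $C_L(u)=\mathbb{F}_q u$. So the key step is to determine the planes directly. A plane $P$ has derived algebra $P'$ of dimension $1$ (it cannot be abelian, by the centralizer computation, since every element has a $1$-dimensional centralizer), and $P'=\mathbb{F}_q n$ with $[p,n]=\lambda n$, $\lambda\neq 0$. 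This forces $n$ to be an eigenvector of $\ad p$ with nonzero eigenvalue, so $n$ is nilpotent. Moreover, $P=N_L(\mathbb{F}_q n)$, which for $n$ conjugate to $x$ is conjugate to $\langle x,h\rangle$. This gives a bijection $P\mapsto P'$ from planes to nilpotent lines, so $|\mathcal{B}|=q+1$. Identifying nilpotent lines with the points of the conic, or equivalently with the fixed lines in $\mathbb{F}_q^2$ (the kernel of $n$), gives the bijection with $\mathbb{P}^1(\mathbb{F}_q)$.

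The main obstacle is the split/nonsplit count. I will use the orbit–stabilizer argument, which needs transitivity of $\mathrm{PGL}_2(\mathbb{F}_q)$ on split lines; this follows from diagonalizability. The step also needs care that the centralizer of $h$ in $\mathrm{PGL}_2$ is the diagonal torus of order $q-1$, extended by the Weyl element swapping $\pm h$.
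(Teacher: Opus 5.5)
Your route differs from the paper's, and on the counting it is more complete.

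\textbf{Comparison with the paper.} Both arguments classify lines by $\Delta(u)=c^2+ab$. The paper then derives $|\mathcal{L}_s|=q(q+1)/2$ and $|\mathcal{L}_{ns}|=q(q-1)/2$ only from the fact that half of $\F_q^{\times}$ consists of squares. That fact alone cannot produce two unequal numbers summing to $q^2$. Your orbit--stabilizer count actually proves the split count: $\mathrm{PGL}_2(\F_q)$ acts transitively on split lines, and the stabilizer of $\F_q h$ is the normalizer of the diagonal torus, of order $2(q-1)$.

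For (2), the paper simply appeals to conjugacy of maximal solvable subalgebras. You instead determine the planes directly:
\begin{itemize}
\item no plane is abelian, because every nonzero element has a one-dimensional centralizer;
\item $P'=\F_q n$ with $n$ nilpotent;
\item $P=N_L(\F_q n)$ by dimension.
\end{itemize}
This gives the bijections between planes, nilpotent lines and $\mathbb{P}^1(\F_q)$ at once. It also agrees with the explicit list $\F_q x+\F_q h$, $B(\alpha)$ that the paper computes later.

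\textbf{The gap in (2).} The problem is your sentence claiming that a Borel subalgebra cannot be one-dimensional. Your very next sentence observes that no plane contains a nonsplit element. That says exactly that each nonsplit line is a one-dimensional maximal solvable subalgebra. So, with the paper's definition of a Borel subalgebra as a maximal solvable subalgebra, the word ``precisely'' in (2) is false, and your converse cannot be completed. The paper's own proof hides the same defect in its claim that every maximal solvable subalgebra is conjugate to $\F_q x+\F_q h$.

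You should say this explicitly and read ``Borel'' as ``conjugate of $\F_q x+\F_q h$''. That is how the paper uses $\mathcal{B}$ afterwards, for instance in the clique $\mathcal{L}_{ns}\cup\mathcal{B}$. With that reading, your $P\mapsto P'$ argument proves (2) completely.

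Two smaller repairs are needed in the same passage:
\begin{itemize}
\item Your eigenvector criterion does not cover nilpotent $u$, since $\mathrm{ad}\,x$ has no eigenvector outside $\F_q x$. These lines do lie in planes, namely $N_L(\F_q u)$.
\item ``Planes are non-abelian'' is not by itself the reason a nonsplit $u$ lies in no plane. The reason is this: if $u\in P$, then $[u,n]=\mu n$. If $\mu\neq 0$, then $\Delta(u)$ is a square, since $\mathrm{ad}\,u$ has eigenvalues $0,\pm 2\sqrt{\Delta(u)}$. If $\mu=0$, then $n\in C_L(u)=\F_q u$, so $u$ would be nilpotent.
\end{itemize}

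\textbf{The character-sum aside.} Your formula $q^2-q\,\eta(-d)$ is incorrect. For fixed $c$, the equation $ab=d-c^2$ has $q-1$ or $2q-1$ solutions. Hence $ab+c^2=d\neq 0$ has $q^2+q\,\eta(d)$ solutions, which agrees with your formula only when $q\equiv 3\pmod 4$. Since you replace this with the orbit count, it does no damage to the proof, but you should correct or remove it.
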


\begin{proof}
(1) Every one-dimensional subspace of $L=\mathfrak{sl}_2(\F_q)$ is a Lie subalgebra, so $|\mathcal{L}| = q^2+q+1$.

Let $0 \neq u \in L$. Writing $u = ax + by + ch$, one computes that the characteristic polynomial of $u$ is $p_u(t) = t^2 - \Delta(u)$, where $\Delta(u) = c^2 + ab$. 
Thus, the type of the line $\langle u \rangle$ depends only on whether $\Delta(u)$ is zero, a nonzero square, or a nonsquare in $\F_q$.
\begin{enumerate}[(i)]
\item If $\Delta(u)=0$, then $u$ is nilpotent, and $\langle u \rangle$ is a nilpotent line. It is well known that all nonzero nilpotent elements of $\mathfrak{sl}_2(\F_q)$ are conjugate, and that the set of nilpotent lines is in bijection with the projective line $\mathbb{P}^1(\F_q)$. Hence $|\mathcal{N}| = q+1$.

\item If $\Delta(u)$ is a nonzero square, then $u$ is diagonalizable over $\F_q$, and $\langle u \rangle$ is a split semisimple line. If $\Delta(u)$ is a nonsquare, then $u$ is semisimple, but not diagonalizable over $\F_q$, and $\langle u \rangle$ is a nonsplit semisimple line. Since $q$ is odd, exactly half of the nonzero elements of $\F_q$ are squares and half are nonsquares, so the stated cardinalities follow.
\end{enumerate}
Thus $\mathcal{L} = \mathcal{N} \sqcup \mathcal{L}_s \sqcup \mathcal{L}_{ns}$, as claimed.

\smallskip

(2) The two-dimensional Lie subalgebras of $L$ are precisely the Borel subalgebras. Indeed, every $2$-dimensional subalgebra of $\mathfrak{sl}_2(\F_q)$ is solvable, and all maximal solvable subalgebras are conjugate to the standard Borel subalgebra $\langle H, x \rangle$. It follows that the set $\mathcal{B}$ of Borel subalgebras is in bijection with the projective line $\mathbb{P}^1(\F_q)$, and hence $|\mathcal{B}| = q+1$.
\end{proof}

\begin{proposition}\label{sl2-adjacency}
Let $L = \mathfrak{sl}_2(\F_q)$ with $q$ odd. Let $\mathcal{L}$ be the set of lines and $\mathcal{B}$ the set of Borel subalgebras. Then:
\begin{enumerate}
\item If $A, B \in \mathcal{B}$ with $A \neq B$, then $A \sim B$. Hence, the induced subgraph on $\mathcal{B}$ is the complete graph $K_{q+1}$.

\item If $A \in \mathcal{L}$ and $B \in \mathcal{B}$, then $A \sim B$ if and only if $A \nsubseteq B$.

\item If $A_1, A_2 \in \mathcal{L}$ with $A_1 \neq A_2$, then $A_1 \sim A_2$ if and only if there is no $B \in \mathcal{B}$ such that $A_1, A_2 \subseteq B$.
\end{enumerate}
\end{proposition}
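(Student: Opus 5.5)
Parts (1) and (2) are immediate from Lemma \ref{g-adjacency}. By Proposition \ref{sl2-subalgebras}(2), the set $\mathcal{P}$ of planes of $L$ is exactly $\mathcal{B}$. Part (1) is then Lemma \ref{g-adjacency}(1) applied to two distinct Borel subalgebras, and $|\mathcal{B}|=q+1$ gives $\Gamma(L)[\mathcal{B}]\cong K_{q+1}$. Part (2) is Lemma \ref{g-adjacency}(2) verbatim with $\mathcal{P}=\mathcal{B}$.

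For (3) I would argue exactly as in Proposition \ref{dim3-derived2}(3)(c), since that argument used only that $\dim L=3$.

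\emph{Sufficiency of a common Borel.} If $A_1,A_2\subseteq B$ for some $B\in\mathcal{B}$, then $\langle A_1,A_2\rangle\subseteq B\neq L$, so $A_1\not\sim A_2$.

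\emph{Necessity.} Suppose $A_1\not\sim A_2$, so $S=\langle A_1,A_2\rangle$ is a proper subalgebra. Since $A_1\neq A_2$, we have $\dim(A_1+A_2)=2$, and $A_1+A_2\subseteq S\subsetneq L$. Hence $\dim S=2$ and $S=A_1+A_2$. Thus $S$ is a two-dimensional Lie subalgebra, and Proposition \ref{sl2-subalgebras}(2) gives $S\in\mathcal{B}$. This $B=S$ contains both lines.

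The only nontrivial input is the identification of all two-dimensional subalgebras with Borel subalgebras, which we take from Proposition \ref{sl2-subalgebras}(2). If one wanted to avoid the appeal to conjugacy there, I would verify it directly. A two-dimensional subalgebra is either abelian or has the form $\langle u,v\rangle$ with $[u,v]=v$. An abelian plane is impossible, because $C_L(u)$ is one-dimensional for every $u\neq 0$ in $\mathfrak{sl}_2$ with $q$ odd; this follows from a direct computation with $\Delta(u)$. Hence every plane is nonabelian, so solvable, and it is maximal solvable because $L$ is simple. I would regard this side check as the only potential obstacle; the adjacency statements themselves are formal consequences.
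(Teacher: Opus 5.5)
Your proof is correct and matches the paper's: parts (1) and (2) are read off from Lemma \ref{g-adjacency} (you rightly cite part (1) of that lemma for the first claim, where the paper's text says part (2)). For (3) you make explicit the step the paper dismisses as ``clearly'': a proper subalgebra containing two distinct lines must be the plane $A_1+A_2$, which lies in $\mathcal{B}$ by Proposition \ref{sl2-subalgebras}(2). Your side check is also sound, though excluding abelian planes is unnecessary, since every two-dimensional subalgebra of the nonsolvable three-dimensional $L$ is automatically maximal solvable.
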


\begin{proof}
(1) By Lemma \ref{g-adjacency} (2), the induced subgraph on $\mathcal{B}$ is complete.

\smallskip

\noindent (2) This follows immediately from Lemma~\ref{g-adjacency} (2).

\smallskip

\noindent (3) Let $A_1, A_2 \in \mathcal{L}$ with $A_1 \neq A_2$. Suppose that there exists a Borel subalgebra $B \in \mathcal{B}$ such that $A_1, A_2 \subseteq B$. Then $\langle A_1, A_2 \rangle \subseteq B \neq L$, so $A_1 \not\sim A_2$.

Conversely, suppose that there is no Borel subalgebra containing both $A_1$ and $A_2$. Then, clearly, $\langle A_1, A_2 \rangle = L$, and $A_1 \sim A_2$.
\end{proof}

\begin{proposition}
Using the standard basis $h,x,y$ for $L=sl_2(\F)$, where $[h,x]=2x$, $[h,y]=-2y$, $[x,y]=h$ (characteristic of $\F\neq 2$), the two-dimensional subalgebras are $B=\F x + \F h$ and $B(\alpha)=\F(h+\alpha x) + \F(y +\frac{1}{4} \alpha^2 x)$ with $\alpha \in \F$.
\end{proposition}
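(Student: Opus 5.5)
Let $U$ be a two-dimensional subalgebra of $L$. The plan is first to check that the listed subspaces are subalgebras, and then to show that every two-dimensional subalgebra is one of them, by locating a nonzero nilpotent element in $U$ and arguing on its position.

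\emph{Step 1: the listed subspaces are subalgebras.} For $B=\F x+\F h$ this is immediate from $[h,x]=2x$. For $B(\alpha)$, put $u=h+\alpha x$ and $v=y+\tfrac14\alpha^2x$. Expanding bilinearly,
\[
[u,v]=[h,y]+\tfrac14\alpha^2[h,x]+\alpha[x,y]=-2y+\tfrac12\alpha^2x+\alpha h .
\]
On the other hand,
\[
-2v+\alpha u=-2y-\tfrac12\alpha^2x+\alpha h+\alpha^2x=-2y+\tfrac12\alpha^2x+\alpha h ,
\]
so $[u,v]=-2v+\alpha u\in B(\alpha)$. The subspaces $B(\alpha)$ are pairwise distinct and different from $B$, because their elements with zero $h$-coefficient lie on the line $\F v$, and $y\in B(\alpha)$ if and only if $\alpha=0$.

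\emph{Step 2: $U$ contains a nonzero nilpotent element.} Since $\dim U=2$, $U$ is solvable and $U'$ has dimension at most one. If $U'=\F w\neq 0$, then $[U,w]\subseteq\F w$, so every $\ad u$ with $u\in U$ has $w$ as an eigenvector. Write $U=\F u+\F w$ with $[u,w]=\lambda w$. For any $z$, the Killing form satisfies $\kappa(w,z)=\operatorname{tr}(\ad w\,\ad z)$, and $\ad w$ maps the flag $\F w\subset U\subset L$ strictly downwards ($[w,w]=0$, $[w,u]\in\F w$). It follows that $\ad w$ is nilpotent, so $w$ is nilpotent. If instead $U$ is abelian, then $U\subseteq C_L(u)$ for every $u\in U$. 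In $\mathfrak{sl}_2$ with characteristic $\neq2$, centralisers of nonzero elements are one-dimensional (one checks $\dim C_L(u)=1$ from $\Delta(u)$ and the explicit brackets), which is a contradiction. So $U$ is nonabelian and contains a nonzero nilpotent $w$.

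\emph{Step 3: case split on the position of $w$.} A nonzero nilpotent element has $c^2+ab=0$, that is, up to scaling $w=x$ or $w=y+\beta h-\beta^2x$.

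Case $w=x$. Then $U=\F x+\F z$ with $[z,x]\in\F x$. Writing $z=ah+by$ modulo $x$, we get $[z,x]=2ax-bh$, so $b=0$ and $U=B$.

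Case $w=y+\beta h-\beta^2x$. A direct computation gives $U=\F w+\F z$, where $z$ is determined modulo $w$ by the requirement that $[z,w]\in\F w$. Matching with $v=y+\tfrac14\alpha^2x$ and $u=h+\alpha x$, set $\beta=-\alpha/2$; then $w=y-\tfrac{\alpha}{2}h-\tfrac{\alpha^2}{4}x=v-\tfrac{\alpha}{2}u$, and the solution space for $z$ is exactly $B(\alpha)$. The cleanest way to see this is that each $B(\alpha)$ is the normaliser of its nilpotent line $\F w$, and $\F w$ is the unique nilpotent line in $U$.

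The main obstacle is this last normaliser computation. The approach is to compute $N_L(\F w)$ directly as a linear system and verify that it is two-dimensional and equal to $B(\alpha)$. Alternatively, one can note that the nilpotent lines other than $\F x$ are parametrised by $\beta\in\F$ bijectively, and appeal to the count $|\mathcal{B}|=q+1$ from Proposition~\ref{sl2-subalgebras} over finite fields. Over a general field, however, the direct computation is needed.
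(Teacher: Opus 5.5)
Your route differs from the paper's and can be completed, but two steps are not established as written.

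\textbf{Step 2: nilpotency of $w$.} The flag argument does not give this. You have shown that $\ad w$ kills $\F w$ and maps $U$ into $\F w$, but nothing shows $[w,L]\subseteq U$, and the Killing form plays no role. What works is a trace argument. Since $w\in U$ and $U$ is a subalgebra, $\ad w$ stabilises the flag $\F w\subset U\subset L$. In an adapted basis it is therefore upper triangular with diagonal $(0,0,\mu)$, where $\mu$ is the scalar by which $\ad w$ acts on $L/U$. As $L=[L,L]$, every $\ad w$ is traceless, so $\mu=0$ and $\ad w$ is nilpotent. To pass from this to the normal forms $x$ and $y+\beta h-\beta^2x$, you also need that $\ad(ax+by+ch)$ has characteristic polynomial $t\bigl(t^2-4(c^2+ab)\bigr)$. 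Then, since $\operatorname{char}\F\neq 2$, nilpotency of $\ad w$ forces $c^2+ab=0$.

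\textbf{Step 3: the case $w=y+\beta h-\beta^2x$.} This case is only announced: you describe how the normaliser computation might go, but do not carry it out. The fallback through $|\mathcal{B}|=q+1$ works only over finite fields and rests on the conjugacy fact the paper merely cites. The case closes quickly with what you already have. Put $\alpha=-2\beta$, so $w=v-\tfrac{\alpha}{2}u$. Step 1 gives $[u,w]=[u,v]=-2v+\alpha u=-2w$, hence $B(\alpha)\subseteq N_L(\F w)$. Moreover $N_L(\F w)\neq L$: otherwise $\ad w$ would have rank at most $1$, while its kernel $C_L(w)$ is one-dimensional by your own centraliser computation. So $N_L(\F w)=B(\alpha)$. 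Because $[U,w]\subseteq U'=\F w$, you get $U\subseteq N_L(\F w)$, whence $U=B(\alpha)$ by dimension.

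\textbf{Comparison with the paper.} With these repairs your argument is complete. The paper proceeds purely in coordinates. If $x\in S$, one gets $S=\F x+\F h$ at once. If $x\notin S$, then $S$ is a complement to $\F x$, so it contains $h+\alpha x$ and $y+\beta x$. Closure under the bracket is then equivalent to $\beta=\tfrac14\alpha^2$, which also shows that each $B(\alpha)$ is a subalgebra. That proof is shorter and needs no facts about centralisers or nilpotent elements.

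Your structural route is longer but explains more. It shows there are no abelian two-dimensional subalgebras, and that each two-dimensional subalgebra is the normaliser of its derived line, which is a nilpotent line. This gives a natural bijection between $\mathcal{B}$ and $\mathcal{N}$. It therefore accounts for $|\mathcal{B}|=|\mathcal{N}|=q+1$ in Proposition~\ref{sl2-subalgebras}, where the paper appeals to conjugacy of Borel subalgebras.
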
 

\begin{proof} 
Let $S$ be a two-dimensional subalgebra of $L$. Suppose first that $x\in S$. Then there is an element $\alpha h+\beta y\in S$ and $[\alpha h+\beta y,x]=2\alpha x-\beta h\in S$. It follows that $S=\F x+\F h$.
\smallskip

So, suppose that $x\notin S$. Then $L=\F x+S$, so $h+\alpha x, y+\beta x\in S$ for some $\alpha,\beta\in \F$. Now $z=[h+\alpha x,y+\beta x]=-2y+2\beta x+\alpha h$, and this is in $S$ if and only if $z=A(h+\alpha x)+B(y+\beta x)$. Equating coefficients, we get $A=\alpha$,$B=-2$, and $2\beta=A\alpha+B\beta=\alpha^2-2\beta$. It follows that $S=F(h+\alpha x)+ F(y+\beta x)$ where $\beta=\frac{1}{4}\alpha^2$.
\end{proof}

So, when are lines connected? Two lines will be connected if and only if they don't both belong to the same Borel subalgebra. Consider the line $L=\lambda h+\mu x+\nu y$. There are two cases.
\bigskip

\noindent {\bf Case 1: $\lambda \neq 0$:}  We can write $L$ as $\F(h+\mu x+\nu y)$. Then $L\subset B$ if and only if $\nu=0$; in this case $L$ won't be connected to any other line in $B$.  
\par

Suppose $L\subset B(\alpha)$. Then $h+\mu x+\nu y=A(h+\alpha x) + B(y +\frac{1}{4} \alpha^2 x)$, whence $A=1$, $B=\nu$ and $\mu=\alpha+\frac{1}{4}\alpha^2\nu$. So $L\subset B(\alpha)$ if and only if $\nu\alpha^2+4\alpha-4\mu=0$. This last equation has a solution precisely when $\mu\nu+1$ is a square in $\F_q$; in this case, $L$ won't be connected to any other line in $B(\alpha)$.
\smallskip

\noindent {\bf Case 2: $\lambda=0$:}  Here $L=\mu x+\nu y$. This clearly is a subset of $B$ if and only if $\nu =0$; in this case, $L$ won't be connected to any other line in $B$. 
\par

It is a subset of $B(\alpha)$ if and only if $\mu x+\nu y=A(h+\alpha x) + B(y +\frac{1}{4} \alpha^2 x)$, whence $A=0$, $B=\nu$ and $\mu=\frac{1}{4}\alpha^2\nu$. So $L\subset B(\alpha)$ if and only if $\mu=0$ or $\alpha^2=\frac{4\mu}{\nu}$; in these cases $L$ won't be connected to any other line in $B(\alpha)$.
\smallskip

Nonsplit semisimple lines don't belong to any Borel subalgebra; nilpotent lines belong to a single Borel subalgebra; and split semisimple lines belong to precisely two Borel subalgebras.
\bigskip

\begin{example} 
The lines in $F=\F_3$ and the Borel subalgebras are classified in the following tables. 
\par


\begin{table}[H]
  \label{tab:rectas_sl2_F3}
  \centering
  \caption{Classification of lines in $\mathfrak{sl}_2(\F_3)$ according to the discriminant $\Delta = c^2 + ab$, where the generator is $ax + by + ch$.}
  \begin{tabular}{l|l|c|l}
    \hline
    \textbf{Line} & \textbf{Generator} & $\Delta$ & \textbf{Type} \\
    \hline
    $L_1  = \F_3 h$           & $h$  & $1$ & Split      \\
    $L_2  = \F_3 x$           & $x$  & $0$ & Nilpotent \\
    $L_3  = \F_3 y$           & $y$  & $0$  & Nilpotent \\
    $L_4  = \F_3(h+x)$       & $h+x$  & $1$  & Split      \\
    $L_5  = \F_3(h+y)$       & $y+h$  & $1$  & Split      \\
    $L_6  = \F_3(x+y)$       & $x+y$  & $1$  & Split      \\
    $L_7  = \F_3(h+2x)$      & $2x+h$  & $1$  & Split      \\
    $L_8  = \F_3(h+2y)$      & $2y+h$  & $1$  & Split      \\
    $L_9  = \F_3(x+2y)$      & $x+2y$  & $2$  & Nonsplit   \\
    $L_{10}= \F_3(x+y+h)$    & $x+y+h$  & $2$  & Nonsplit   \\
    $L_{11}= \F_3(2x+y+h)$   & $2x+y+h$  & $0$ & Nilpotent \\
    $L_{12}= \F_3(x+2y+h)$   & $x+2y+h$  & $0$ & Nilpotent \\
    $L_{13}= \F_3(x+y+2h)$   & $x+y+2h$  & $2$ & Nonsplit   \\
    \hline
  \end{tabular}
\end{table}
\smallskip





\begin{table}[H]
  \label{tab:borels_sl2_F3}
  \centering
  \caption{Borel subalgebras of $\mathfrak{sl}_2(\F_3)$ and the lines they contain.}
  \begin{tabular}{l|c}
    \hline
   \textbf{Borel}
      & \textbf{Contained lines}  \\
    \hline
    $B=\F_3 x+\F_3 h$  & $L_1,\, L_2,\, L_4,\, L_7$     \\
    $B(0)=\F_3 h+\F_3y $ & $L_1,\, L_3,\, L_5,\, L_8$    \\
    $B(1)=\F_3(h+x)+\F_3(y+x)$ & $L_4,\, L_6,\, L_8,\, L_{11}$     \\
    $B(2)=\F_3(h+2x)+\F_3(y+x)$ & $L_5,\, L_6,\, L_7,\, L_{12}$   \\
    \hline
  \end{tabular}
\end{table}

So, each of $L_9, L_{10}$ and $L_{13}$ is adjacent to the other 12 lines and to all 4 Borel subalgebras, as they are not in any Borel subalgebra. These are the nonsplit semisimple lines. The lines $L_2, L_3, L_{11}$ and $L_{12}$ are each adjacent to 9 other lines and to 3 Borel subalgebras, as they are in a single Borel subalgebra. These are the nilpotent lines. The lines $L_1, L_4, L_5, L_6, L_7$ and $L_8$ are each adjacent to 6 other lines and to 2 Borel subalgebras, as they are in 2 Borel subalgebras. These are the split semisimple lines.
\end{example} 

\begin{figure}[H]
    \label{sl2-perfect}
\begin{center}
\begin{tikzpicture}[
  lns/.style   = {circle, fill=black,           minimum size=7pt, inner sep=0pt},
  borel/.style = {circle, fill=red,             minimum size=7pt, inner sep=0pt},
  nilp/.style  = {circle, fill=green!70!black,  minimum size=7pt, inner sep=0pt},
  split/.style = {circle, fill=blue,            minimum size=7pt, inner sep=0pt},
  edge/.style  = {draw=black,thick}]

\def\rLns{3.5}    
\def\rBorel{3.5}  
\def\rNilp{3.5}   
\def\rSplit{3.5}  


\node[lns,   label={[font=\small]above:$L_9$}]   (L9)  at (40 :\rLns)  {};
\node[lns,   label={[font=\small]below left:$L_{10}$}]  (L10) at (160:\rLns)  {};
\node[lns,   label={[font=\small]below right:$L_{13}$}] (L13) at (280:\rLns)  {};

\node[borel, label={[font=\small]above:$\mathcal{B}$}]       (B)  at (100 :\rBorel) {};
\node[borel, label={[font=\small]left:$\mathcal{B}(0)$}]     (B0) at (180:\rBorel) {};
\node[borel, label={[font=\small]below:$\mathcal{B}(1)$}]    (B1) at (260:\rBorel) {};
\node[borel, label={[font=\small]right:$\mathcal{B}(2)$}]    (B2) at (0  :\rBorel) {};

\node[nilp,  label={[font=\small]above right:$L_2$}]    (L2)  at (60 :\rNilp) {};
\node[nilp,  label={[font=\small]above left:$L_3$}]     (L3)  at (120:\rNilp) {};
\node[nilp,  label={[font=\small]below left:$L_{11}$}]  (L11) at (220:\rNilp) {};
\node[nilp,  label={[font=\small]below right:$L_{12}$}] (L12) at (300:\rNilp) {};

\node[split, label={[font=\small]above:$L_1$}]        (L1) at (80 :\rSplit) {};
\node[split, label={[font=\small]above right:$L_4$}]  (L4) at (20 :\rSplit) {};
\node[split, label={[font=\small]below right:$L_5$}]  (L5) at (320:\rSplit) {};
\node[split, label={[font=\small]below:$L_6$}]        (L6) at (240:\rSplit) {};
\node[split, label={[font=\small]below left:$L_7$}]   (L7) at (200:\rSplit) {};
\node[split, label={[font=\small]above left:$L_8$}]   (L8) at (140:\rSplit) {};


\draw[edge] (B)--(B0)--(B1) --(B2) -- (B0);   
\draw[edge] (B1)--(B)--(B2); 

 
\draw[edge] (B2) --(L1)--(B1);   
\draw[edge] (B1)--(L2)--(B0) (L2)--(B2);
\draw[edge] (L3)--(B) (B2) -- (L3)--(B1) ;
\draw[edge] (B2) -- (L4)--(B0);  
\draw[edge] (L5)--(B) (L5)--(B1);
\draw[edge] (L6)--(B) (L6)--(B0);
\draw[edge] (L7)--(B0) (L7)--(B1);
\draw[edge] (L8)--(B) (L8)--(B2);
\draw[edge] (L9)--(B) (L9)--(B0);
\draw[edge] (B2) -- (L9)--(B1) ;
\draw[edge] (L10)--(B) (L10)--(B0);
\draw[edge] (L10)--(B1) (L10)--(B2);
\draw[edge] (L11)--(B) (L11)--(B0) (L11)--(B2);
\draw[edge] (L12)--(B) (L12)--(B0) (L12)--(B1);
\draw[edge] (L13)--(B) (L13)--(B0);
\draw[edge] (L13)--(B1) (L13)--(B2);

%

\draw[edge] (L9)--(L1) (L9)--(L2) (L9)--(L3);
\draw[edge] (L9)--(L4) (L9)--(L5) (L9)--(L6);
\draw[edge] (L9)--(L7) (L9)--(L8);
\draw[edge] (L9)--(L10) (L9)--(L11);
\draw[edge] (L9)--(L12) (L9)--(L13);

\draw[edge] (L10)--(L1) (L10)--(L2) (L10)--(L3);
\draw[edge] (L10)--(L4) (L10)--(L5) (L10)--(L6);
\draw[edge] (L10)--(L7) (L10)--(L8);
\draw[edge] (L10)--(L11) (L10)--(L12) (L10)--(L13);

\draw[edge] (L13)--(L1) (L13)--(L2) (L13)--(L3);
\draw[edge] (L13)--(L4) (L13)--(L5) (L13)--(L6);
\draw[edge] (L13)--(L7) (L13)--(L8);
\draw[edge] (L13)--(L11) (L13)--(L12);

\draw[edge] (L1)--(L6) (L1)--(L11) (L1)--(L12);

\draw[edge] (L2)--(L3) (L2)--(L5) (L2)--(L6);
\draw[edge] (L2)--(L8) (L2)--(L11) (L2)--(L12);

\draw[edge] (L3)--(L4) (L3)--(L6) (L3)--(L7);
\draw[edge] (L3)--(L11) (L3)--(L12);

\draw[edge] (L4)--(L5) (L4)--(L12);

\draw[edge] (L5)--(L11);

\draw[edge] (L7)--(L8) (L7)--(L11);

\draw[edge] (L8)--(L12);

\draw[edge] (L11)--(L12);
\end{tikzpicture}
\end{center}
  
\caption{The comaximal graph of $\mathfrak{sl}_2(\mathbb{F}_3)$. Blue vertices are split semisimple lines, red vertices are Borels, green vertices are nilpotent lines, and black vertices are nonsplit semisimple lines.}
\end{figure}

The following result is well-known.

\begin{lemma}\label{borel-lines}
Each Borel subalgebra of $\mathfrak{sl}_2(\mathbb{F}_q)$ contains exactly one nilpotent line and exactly $q$ split semisimple lines.
\end{lemma}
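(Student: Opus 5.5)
Since all Borel subalgebras are conjugate (Proposition \ref{sl2-subalgebras}(2)) and conjugation by an automorphism preserves the type of a line (nilpotent, split semisimple, nonsplit semisimple), it suffices to check the claim for the standard Borel subalgebra $B=\F_q x+\F_q h$. I would work directly with the discriminant $\Delta(u)=c^2+ab$ from the proof of Proposition \ref{sl2-subalgebras}, where $u=ax+by+ch$. An element of $B$ has the form $u=ax+ch$, so $b=0$ and $\Delta(u)=c^2$.

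The $q+1$ lines of $B$ are $\F_q x$ together with the lines $\F_q(h+ax)$ for $a\in\F_q$. For $\F_q x$ we have $c=0$, hence $\Delta=0$, so it is the unique nilpotent line of $B$. For each of the $q$ lines $\F_q(h+ax)$ we have $c=1$, hence $\Delta=1$, a nonzero square. Thus each is split semisimple; explicitly, $\ad(h+ax)$ has eigenvalues $0,\pm2$ in $\F_q$, which are distinct since $q$ is odd. Consequently $B$ contains exactly one nilpotent line, exactly $q$ split semisimple lines, and no nonsplit semisimple line.

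There is no real obstacle here. The only point needing care is the reduction to the standard Borel subalgebra: the type of a line is defined via the characteristic polynomial, or equivalently via the diagonalizability of $\ad u$, and both are invariant under automorphisms of $L$.

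One can also avoid conjugacy entirely by using the explicit list of two-dimensional subalgebras $B(\alpha)$ given in the preceding proposition. In $B(\alpha)$ the element $y+\tfrac14\alpha^2x$ has discriminant $\tfrac14\alpha^2-\tfrac14\alpha^2=0$, so it spans a nilpotent line. Since $B(\alpha)$ is solvable and non-nilpotent, its nilpotent elements lie in its derived algebra $B(\alpha)'$, which is one-dimensional. Hence this is the only nilpotent line, and the remaining $q$ lines, which are not in $B(\alpha)'$, are split semisimple by the same eigenvalue argument.
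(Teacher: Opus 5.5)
Your main argument is correct and follows the paper's route. The paper's proof only says that every Borel subalgebra is conjugate to the standard upper triangular one; you add the check it leaves implicit. On $\mathbb{F}_q x+\mathbb{F}_q h$ one has $\Delta(ax+ch)=c^2$, so $\mathbb{F}_q x$ is the unique nilpotent line and the $q$ lines $\mathbb{F}_q(h+ax)$ are split semisimple. The reduction step is also justified, since the characteristic polynomial $t(t^2-4\Delta(u))$ of $\ad u$ is automorphism-invariant.

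However, your optional last paragraph contains a false computation. For $u=y+\tfrac14\alpha^2x$ you have $a=\tfrac14\alpha^2$, $b=1$, $c=0$, so $\Delta(u)=\tfrac14\alpha^2$, not $0$. For $\alpha\neq 0$ this is a nonzero square, so $u$ is split semisimple rather than nilpotent. For example, over $\mathbb{F}_3$ with $\alpha=1$ it is $x+y$, which the paper's table lists as the split line $L_6$ in $B(1)$.

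The nilpotent line of $B(\alpha)$ is its derived algebra, spanned by $[h+\alpha x,\,y+\tfrac14\alpha^2x]=-2\bigl(y-\tfrac{\alpha}{2}h-\tfrac{\alpha^2}{4}x\bigr)$. The cleanest conjugacy-free argument is the direct identity $\Delta\bigl(s(h+\alpha x)+t(y+\tfrac14\alpha^2x)\bigr)=(s+\tfrac{\alpha}{2}t)^2$. This vanishes on exactly one line of $B(\alpha)$ and is a nonzero square on the other $q$. Either repair that paragraph this way or drop it; your main proof does not depend on it.
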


\begin{proof}
Simply note that they are all conjugate to the standard upper triangular one.
\end{proof}

\begin{corollary}\label{sl2-degrees}
Let $L = \mathfrak{sl}_2(\mathbb{F}_q)$ with $q$ odd. Then:
\begin{enumerate}
    \item For every $B \in \mathcal{B}$, $\deg(B) = q^2+q$.
    \item For $A \in \mathcal{L}$,
    \[\deg(A) =
        \begin{cases}
            q^2+q  & \text{if } A \in \mathcal{N},   \\
            q^2 - 1 & \text{if } A \in \mathcal{L}_s, \\
            (q+1)^2 & \text{if } A \in \mathcal{L}_{ns}.
        \end{cases}    \]
In particular, every Borel subalgebra of $L$ and every nilpotent line have the same degree $q^2+q$.
    \item The degree sequence of $\Gamma(L)$ is
    \[\Bigl(\underbrace{q^2+q,\ldots, q^2+q}_{2(q+1)}, \underbrace{q^2-1,\ldots,q^2-1}_{\frac{q(q+1)}{2}}, \underbrace{(q+1)^2,\ldots,(q+1)^2}_{\frac{q(q-1)}{2}}\Bigr).    \]
    In particular, $\Gamma(L)$ is not regular for $q \geq 3$.
    \item $\Gamma(L)$ has order $(q+1)^2+1$ and size $\frac{q(q+1)^3}{2}$.
\end{enumerate}
\end{corollary}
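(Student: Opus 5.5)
The plan is a direct count using Proposition~\ref{sl2-adjacency}, Proposition~\ref{sl2-subalgebras} and Lemma~\ref{borel-lines}, together with the incidence facts recorded just before the example: a nonsplit semisimple line lies in no Borel subalgebra, a nilpotent line lies in exactly one, and a split semisimple line lies in exactly two. First I would double-count incidences as a consistency check. Lemma~\ref{borel-lines} gives $(q+1)\cdot 1$ nilpotent incidences, matching $|\mathcal N|=q+1$ lines each in one Borel. It also gives $(q+1)q$ split incidences, matching $\frac{q(q+1)}{2}$ lines each in two Borels.

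\textbf{Borels.} For $B\in\mathcal B$, Proposition~\ref{sl2-adjacency}(1) makes $B$ adjacent to the other $q$ Borels. Part (2) makes it adjacent to the lines not in $B$. Since $B$ contains $q+1$ lines, there are $q^2$ such lines, so $\deg B=q^2+q$.

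\textbf{Lines.} For a line $A$, the Borel neighbours are those not containing $A$; there are $q+1$, $q$ or $q-1$ of them according as $A$ is nonsplit, nilpotent or split. By part (3), the line non-neighbours of $A$ are the lines other than $A$ lying in some Borel that contains $A$.
\begin{itemize}
\item A nonsplit line has no line non-neighbours, giving $\deg A=(q^2+q)+(q+1)=(q+1)^2$.
\item A nilpotent line has $q$ line non-neighbours, namely the other lines of its unique Borel, giving $\deg A=q^2+q-q+q=q^2+q$.
\item A split line lies in two Borels $B_1,B_2$ with $B_1\cap B_2=A$, since distinct planes meet in a line. These contribute $2q$ distinct non-neighbours, giving $\deg A=(q^2+q-2q)+(q-1)=q^2-1$.
\end{itemize}
The step needing the most care is justifying these incidence numbers. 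I would argue via conjugacy: a nilpotent element lies in a unique Borel (its normaliser). A split semisimple $h$ lies exactly in $\F h+\F x$ and $\F h+\F y$, which follows from the explicit list $B,B(\alpha)$ and the computation with $\mu=\nu=0$. A nonsplit element lies in no Borel, since every element of a Borel has eigenvalues in $\F_q$.

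\textbf{Degree sequence, order and size.} Part (3) follows by listing the $q+1$ Borels and $q+1$ nilpotent lines, giving $2(q+1)$ vertices of degree $q^2+q$, and then the split and nonsplit lines with the cardinalities from Proposition~\ref{sl2-subalgebras}. Non-regularity holds because $q^2-1<q^2+q$ for $q\ge 3$. For part (4), the order is $(q^2+q+1)+(q+1)=(q+1)^2+1$. The size is half the degree sum:
\[
2(q+1)(q^2+q)+\tfrac{q(q+1)}{2}(q^2-1)+\tfrac{q(q-1)}{2}(q+1)^2 = q(q+1)\bigl[2(q+1)+(q^2-1)\bigr]=q(q+1)^3 .
\]
The bracketed factor arises from $\frac{(q+1)(q-1)}{2}\cdot 2=q^2-1$ after combining the last two terms. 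Halving gives $\frac{q(q+1)^3}{2}$.
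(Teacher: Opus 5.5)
Your proposal is correct and follows the paper's proof essentially step for step. You get the Borel degrees from the $q$ other Borels plus the $q^2$ lines outside, the line degrees from the incidence numbers $0,1,2$ via Proposition~\ref{sl2-adjacency}, and the order and size from the handshaking lemma. Two points go slightly beyond the paper: you justify the incidence numbers (conjugacy together with the explicit list $B$, $B(\alpha)$), which the paper only asserts. Your degree sum also correctly uses $|\mathcal{L}_{ns}|=\frac{q(q-1)}{2}$, whereas the paper's displayed intermediate line writes $\frac{q^2+q}{2}$ as the last coefficient, although its final value $q(q+1)^3$ is right.
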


\begin{proof}
(1) Each $B \in \mathcal{B}$ is adjacent to the other $q$ Borels. Since $B$ contains exactly $q+1$ lines, it is adjacent to the remaining $q^2$ lines. Hence $\deg(B) = q + q^2$.

\smallskip
(2) Let $A\in \mathcal{L}$. Then the neighbors of $A$ consist of the $(q+1)-m(A)$ Borels not containing $A$, plus all lines $B$ such that no Borel contains both $A$ and $B$. We differentiate the three possible cases.

\smallskip
\noindent\textit{Case 1: $A \in \mathcal{N}$.} There is a unique Borel $B_A$ containing $A$, so $A$ is adjacent to $q$ 
Borels. By Lemma \ref{borel-lines} the $q$ lines of $B_A \setminus \{A\}$ are all split semisimple. These are precisely the lines sharing a Borel with $A$, hence the lines not adjacent to $A$. Therefore $A$ is adjacent 
to $(q^2+q)-q$ lines, and so $\deg(A) = q + q^2$.

\smallskip
\noindent\textit{Case 2: $A \in \mathcal{L}_s$.} There are exactly two Borels $B_1, B_2$ containing $A$. Since their intersection is the single line $A$, then $B_1$ and $B_2$ share no other line. The lines not adjacent to $A$ are those sharing a Borel with $A$, namely the lines of $B_1 \setminus \{A\}$ and $B_2 \setminus \{A\}$. These two sets are disjoint, and each has size $q$, giving $2q$ lines not adjacent to $A$. Hence $A$ is adjacent to $(q^2+q)-2q$ lines and to $q-1$ Borels, giving $\deg(A) = (q-1) + (q^2+q)-2q = q^2-1$.

\smallskip
\noindent\textit{Case 3: $A \in \mathcal{L}_{ns}$.} The line $A$ is contained in no Borel. Hence, for any other line $B$, no Borel contains both $A$ and $B$, so $A \sim B$. Thus $A$ is adjacent to all $q+1$ Borels and to all $q^2+q$ other lines, and we have $\deg(A) = (q+1) + (q^2+q) = (q+1)^2$.

\smallskip
(3) The degree sequence follows directly from (1) - (2) and the cardinalities $|\mathcal{B}| = |\mathcal{N}| = q+1$, 
$|\mathcal{L}_s| = \frac{q(q+1)}{2}$ and $|\mathcal{L}_{ns}| = \frac{q(q-1)}{2}$ from Proposition \ref{sl2-subalgebras}. The three 
degree values $q(q+1)$, $q^2-1$ and $(q+1)^2$ are pairwise distinct 
for $q \geq 3$, so $\Gamma(L)$ is not regular.

\smallskip
(4) The order of $\Gamma(L)$ is $|\mathcal{B}|+|\mathcal{L}| = (q+1)+(q^2+q+1) = (q+1)^2+1$. For the size of $\Gamma(L)$, we apply the handshaking lemma:
\begin{align*}
2|E| &= |\mathcal{B}|(q^2+q) + |\mathcal{N}| (q^2+q) + |\mathcal{L}_s| (q^2-1) + |\mathcal{L}_{ns}| (q+1)^2 \\
     &= (q+1)(q^2+q) + (q+1)(q^2+q) + \tfrac{q^2+q}{2}(q^2-1) + \tfrac{q^2+q}{2}(q+1)^2 \\
     &= q(q+1)^3.
\end{align*}
Therefore, $|E| = \dfrac{q(q+1)^3}{2}$.
\end{proof}

\begin{corollary}\label{sl2-basic-invariants}
Let $L = \mathfrak{sl}_2(\mathbb{F}_q)$ with $q$ odd. Then:
\begin{enumerate}
    \item $\Gamma(L)$ is connected
    \item $\mathrm{girth}(\Gamma(L)) = 3$
    \item $\omega(\Gamma(L)) = \tfrac{q^2+q+2}{2}$
    \item $\mathrm{diam}(\Gamma(L)) = 2$
    \item $\gamma(\Gamma(L)) = 1$
    \item Every vertex of $\Gamma(L)$ lies on a triangle
    \item $\Gamma(L)$ is non-planar
    \item $r(\Gamma(L)) = 1$. Moreover, the center of $\Gamma(L)$ is the set of all nonsplit semisimple lines.
\end{enumerate}
\end{corollary}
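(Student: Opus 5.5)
The plan is to derive everything from the adjacency rules of Proposition \ref{sl2-adjacency}, the degree computations in Corollary \ref{sl2-degrees}, and the incidence facts: nonsplit semisimple lines lie in no Borel, nilpotent lines in exactly one, split semisimple lines in exactly two, and each Borel contains one nilpotent and $q$ split lines (Lemma \ref{borel-lines}). The basic observation, already used in Corollary \ref{sl2-degrees}, is that any $A\in\mathcal{L}_{ns}$ has degree $(q+1)^2$, which is the order minus one. So $A$ is adjacent to every other vertex.

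Since $q\ge 3$, we have $|\mathcal{L}_{ns}|=q(q-1)/2\ge 3$, so a nonsplit line exists. This gives items (1), (5), (8) and most of (4) and (6) at once.
\begin{itemize}
\item[(1), (5)] A dominating vertex gives connectedness and $\gamma=1$.
\item[(8)] A vertex has eccentricity $1$ exactly when it has full degree. By the degree list, only nonsplit lines have degree $(q+1)^2$, because $q^2+q$ and $q^2-1$ are smaller. Hence $r=1$ and the center is $\mathcal{L}_{ns}$.
\item[(4)] The dominating vertex gives $\mathrm{diam}\le 2$. The graph is not complete, since a Borel and a line inside it are non-adjacent, so $\mathrm{diam}=2$.
\item[(2), (6)] Take two distinct nonsplit lines $N_1,N_2$. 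For any vertex $v\ne N_1,N_2$, the triple $v,N_1,N_2$ is a triangle, because both $N_i$ are universal. For $v=N_1$, use $N_1,N_2,N_3$. So the girth is $3$ and every vertex lies on a triangle.
\item[(7)] The Borels form $K_{q+1}$ with $q+1\ge 4$. Adding one nonsplit line gives $K_{q+2}\supseteq K_5$, so by Kuratowski the graph is non-planar.
\end{itemize}

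For item (3), the clique $\mathcal{L}_{ns}\cup\mathcal{B}$ has size $q(q-1)/2+q+1=(q^2+q+2)/2$. It is a clique because distinct Borels are adjacent (Proposition \ref{sl2-adjacency}(1)) and nonsplit lines are universal. The real work is the upper bound.

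Since $\mathcal{L}_{ns}$ is universal, $\omega = |\mathcal{L}_{ns}| + \omega(H)$, where $H$ is the subgraph induced on $\mathcal{B}\cup\mathcal{N}\cup\mathcal{L}_s$. So it suffices to show $\omega(H)\le q+1$. Let $C$ be a clique in $H$ containing $b$ Borels and a set $S$ of lines.
\begin{itemize}
\item Each line in $S$ lies in at least one Borel. Two lines of $S$ are adjacent, so they share no Borel. Assigning to each line of $S$ the set of Borels containing it, these sets are nonempty and pairwise disjoint.
\item Each Borel in $C$ must not contain any line of $S$, by Proposition \ref{sl2-adjacency}(2). So the $b$ Borels avoid all the Borel-sets of the lines in $S$.
\end{itemize}
Therefore $|S|+b\le q+1$, since the Borel-sets of $S$ and the chosen Borels are disjoint nonempty subsets of the $q+1$ Borels. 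This gives $\omega(H)\le q+1$, hence $\omega(\Gamma(L))=(q^2+q+2)/2$.

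This counting step is the one I expect to need care. The key is to phrase it as an injection of clique members into disjoint nonempty subsets of $\mathcal{B}$.
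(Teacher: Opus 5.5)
Your proof is correct. For items (1), (2) and (4)--(8) it follows the paper's route: everything comes from the fact that each nonsplit semisimple line has degree $(q+1)^2$, so it is adjacent to every other vertex. The paper gets connectivity slightly differently, from the Borel clique $K_{q+1}$ together with the fact that every vertex is adjacent to some Borel, but this difference is cosmetic.

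The real difference is in item (3). After exhibiting the clique $\mathcal{L}_{ns}\cup\mathcal{B}$, the paper argues only that no nilpotent or split semisimple line can be added to it, since such a line is non-adjacent to the Borel(s) containing it. That shows $\mathcal{L}_{ns}\cup\mathcal{B}$ is maximal under inclusion, not that it has maximum size. The distinction matters here: $\mathcal{N}\cup\mathcal{L}_{ns}$ is another clique of the same size, because distinct nilpotent lines lie in distinct Borels. So maximum cliques are not unique, and a global counting argument is genuinely needed.

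Your argument supplies exactly this upper bound. You first reduce to $\omega(\Gamma(L))=|\mathcal{L}_{ns}|+\omega(H)$, using that the nonsplit lines are universal vertices. You then send the members of a clique of $H$ to pairwise disjoint nonempty subsets of $\mathcal{B}$. The Borel-sets of the lines are disjoint by Proposition \ref{sl2-adjacency}(3), and they avoid the chosen Borels by Proposition \ref{sl2-adjacency}(2). This gives $\omega(H)\le q+1$.

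So your version costs one short counting paragraph but actually proves the equality. The paper's argument, as written, only gives $\omega(\Gamma(L))\ge \frac{q^2+q+2}{2}$ together with maximality of one particular clique.
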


\begin{proof}
(1) - (2) Since $\Gamma(L)[\mathcal{B}] \cong K_{q+1}$, and every vertex of $\Gamma(L)$ is adjacent to at least one Borel, we have that $\Gamma(L)$ is connected. Further, as $q+1 \geq 4$, the graph $\Gamma(L)$ contains triangles, so $\mathrm{girth}(\Gamma(L)) = 3$.

\smallskip
\noindent (3) We first show that $\mathcal{L}_{ns} \cup \mathcal{B}$ is a clique of size $\frac{q^2+q+2}{2}$. Any two Borels are adjacent. For $A \in \mathcal{L}_{ns}$ and $B \in \mathcal{B}$, we have $A \not\subseteq B$, so $A \sim B$. For two distinct lines $A_1, A_2 \in \mathcal{L}_{ns}$, no Borel contains either, so no Borel contains both, and $A_1 \sim A_2$. Hence $\mathcal{L}_{ns} \cup \mathcal{B}$ is a 
clique of size
\[|\mathcal{L}_{ns}| + |\mathcal{B}| = \tfrac{q(q-1)}{2} + (q+1) = \tfrac{q^2+q+2}{2}.\]
No larger clique exists. In fact, a nilpotent line $A \in \mathcal{N}$ lies in exactly one Borel of $\mathcal{B}$, so it is not adjacent to that Borel and cannot extend the clique. Further, a split semisimple line $A \in \mathcal{L}_s$ lies in exactly two Borels of $\mathcal{B}$, so it is
not adjacent to those two Borels and cannot extend the clique either. Hence $\omega(\Gamma(L)) = \tfrac{q^2+q+2}{2}$.
\smallskip

\noindent (4) By Corollary \ref{sl2-degrees}, every $A \in \mathcal{L}_{ns}$ has degree $(q+1)^2 = |V(\Gamma(L))|-1$, so $A$ is adjacent to every other vertex of $\Gamma(L)$. Hence, for any two non-adjacent vertices $C, D$, the path $C\sim A \sim D$ has length $2$. Since $\Gamma(L)$ is not complete, the diameter of $\Gamma(L)$ is exactly 2.
\smallskip

\noindent (5) Using (4), we have that any $A \in \mathcal{L}_{ns}$ is adjacent to every other vertex of $\Gamma(L)$. Hence $\{A\}$ is a dominating set of size $1$, so $\gamma(\Gamma(L)) = 1$.

\noindent (6) Let $C$ be any vertex of $\Gamma(L)$. Since $|\mathcal{L}_{ns}| \geq 3$ for $q\geq 3$, for two distinct lines $A_1, A_2 \in \mathcal{L}_{ns}$ we have the triangle $C\sim A_1 \sim A_2\sim C$.
\smallskip

\noindent (7) Since $q \geq 3$, we have $|\mathcal{L}_{ns}| \geq 3$ and $|\mathcal{B}| \geq 4$. By
(3) $\mathcal{L}_{ns} \cup \mathcal{B}$ is a clique of size at least 7. Then the subgraph induced on $\mathcal{L}_{ns} \cup \mathcal{B}$ contains $K_5$. By Kuratowski's theorem, $\Gamma(L)$ is non-planar.
\smallskip

\noindent (8) For every $A \in \mathcal{L}_{ns}$ hold that $\mathrm{ecc}(A) = 1$. Then in particular $r(\Gamma(L)) \leq 1$. Since $\Gamma(L)$ is not a complete graph, we have that 
$r(\Gamma(L)) \geq 1$, hence $r(\Gamma(L)) = 1$. It remains to show that no other vertex has eccentricity $1$. Since $\mathrm{diam}(\Gamma(L)) = 2$ by (4), every vertex has eccentricity at most $2$. A vertex $C$ has eccentricity $1$ if and only if it is adjacent to all other vertices; this holds precisely for all elements in $\mathcal{L}_{ns}$. Thus, the center of $\Gamma(L)$ is $\mathcal{L}_{ns}$.
\end{proof}

\begin{theorem}\label{chromatic-sl2}
Let $L = \mathfrak{sl}_2(\mathbb{F}_q)$ with $q$ odd. Then $\chi(\Gamma(L)) = \omega(\Gamma(L)) = \tfrac{q^2+q+2}{2}$.
\end{theorem}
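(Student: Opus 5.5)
The lower bound is immediate: by Corollary \ref{sl2-basic-invariants}(3), $\mathcal{L}_{ns}\cup\mathcal{B}$ is a clique of size $\tfrac{q^2+q+2}{2}$. Hence $\chi(\Gamma(L))\ge \omega(\Gamma(L))=\tfrac{q^2+q+2}{2}$. It remains to exhibit a proper colouring with exactly that many colours. I would do this by partitioning the vertex set into $\tfrac{q(q-1)}{2}+(q+1)$ independent sets.

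\textbf{Nonsplit lines.} Each nonsplit semisimple line gets its own singleton colour class. This uses $|\mathcal{L}_{ns}|=\tfrac{q(q-1)}{2}$ colours. Singletons are trivially independent, so no further check is needed for them.

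\textbf{Borel classes.} For each Borel subalgebra $B\in\mathcal{B}$, the key observation is that $\{B\}\cup\{\text{lines contained in }B\}$ is an independent set. Two lines inside $B$ lie in a common Borel, so they are non-adjacent by Proposition \ref{sl2-adjacency}(3). A line contained in $B$ is non-adjacent to $B$ by Proposition \ref{sl2-adjacency}(2). So I will give each Borel $B$ a colour $c_B$ and distribute the remaining lines among these $q+1$ colours as follows:
\begin{itemize}
\item A nilpotent line $A\in\mathcal{N}$ lies in a unique Borel $B_A$, and receives the colour $c_{B_A}$.
\item A split semisimple line $A\in\mathcal{L}_s$ lies in exactly two Borels, and receives the colour of either of them, chosen arbitrarily.
\end{itemize}
Every vertex in $\mathcal{N}\cup\mathcal{L}_s$ lies in at least one Borel, by Proposition \ref{sl2-subalgebras} together with the remark preceding Lemma \ref{borel-lines}. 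Hence this assignment is well defined.

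Each colour class $c_B$ is then a subset of $\{B\}\cup\{\text{lines in }B\}$, so it is independent. Every vertex receives exactly one colour, and the total number of colours is $\tfrac{q(q-1)}{2}+(q+1)=\tfrac{q^2+q+2}{2}$. Therefore $\chi(\Gamma(L))\le \tfrac{q^2+q+2}{2}$, which gives equality.

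The only step that needs care is the claim that each split semisimple line lies in exactly two Borels and each nilpotent line in exactly one, so that every line outside $\mathcal{L}_{ns}$ is absorbed into some Borel class. This is stated in the text before Lemma \ref{borel-lines} and used in Corollary \ref{sl2-degrees}, so I do not expect a genuine obstacle. The argument is essentially the observation that the Borel subalgebras, together with the lines they contain, cover $\mathcal{N}\cup\mathcal{L}_s$ by $q+1$ independent sets.
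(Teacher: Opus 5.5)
Your proposal is correct and is essentially the paper's proof: the clique $\mathcal{L}_{ns}\cup\mathcal{B}$ gives the lower bound, and the upper bound comes from one singleton colour class per nonsplit semisimple line plus one class per Borel $B$, which absorbs the nilpotent and split semisimple lines assigned to $B$. Your explicit appeal to Proposition \ref{sl2-adjacency}(2) for the non-adjacency of $B$ with its own lines fills a step the paper leaves implicit; also, because your colouring gives $\omega(\Gamma(L))\le\chi(\Gamma(L))\le\tfrac{q^2+q+2}{2}$, you only need the existence of that clique, not the non-extendability argument of Corollary \ref{sl2-basic-invariants}(3), which by itself shows only that this particular clique is maximal.
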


\begin{proof}
Since $\chi(\Gamma(L)) \geq \omega(\Gamma(L))$ always holds, and $\omega(\Gamma(L)) = \frac{q^2+q+2}{2}$ by Corollary \ref{sl2-basic-invariants} (3), it suffices to construct a proper coloring using exactly 
$\frac{q^2+q+2}{2}$ colors. By Corollary \ref{sl2-basic-invariants} (3), the set $L_{ns} \cup \mathcal{B}$ is a clique of size $\frac{q^2+q+2}{2}$, so any proper coloring must assign pairwise distinct colors to its vertices. We use this as the 
basis for our coloring. Label the $q+1$ Borel subalgebras as $B_1, \ldots, B_{q+1}$ and the $\frac{q(q-1)}{2}$ nonsplit semisimple lines as $A_1, \ldots, A_{\frac{q(q-1)}{2}}$. Define a coloring with color set $\{1, \ldots, \tfrac{q^2+q+2}{2}\}$ as follows:
\begin{enumerate}
\item Assign color $i$ to $B_i$, for $i = 1, \ldots, q+1$.

\item Assign color $q+1+j$ to $A_j$, for $j = 1, \ldots,     \frac{q(q-1)}{2}$.
    
\item Every remaining vertex is a nilpotent or split semisimple line. A nilpotent line is contained in exactly one Borel; assign it the color of that Borel. A split semisimple line is contained in exactly two Borels; choose either one and assign its color.
\end{enumerate}
It remains to verify that no two vertices sharing a Borel color $i$ are adjacent. Every vertex with color $i$ is contained in $B_i$, and by Proposition \ref{sl2-adjacency} (3), two vertices sharing a common Borel are non-adjacent. Hence, vertices with the same Borel color $i$ form an independent set, and the coloring is proper.
\end{proof}

\bibliographystyle{plane}

\end{document}